\providecommand{\U}[1]{\protect\rule{.1in}{.1in}}
\providecommand{\U}[1]{\protect\rule{.1in}{.1in}}
\providecommand{\U}[1]{\protect\rule{.1in}{.1in}}
\providecommand{\U}[1]{\protect\rule{.1in}{.1in}}
\providecommand{\U}[1]{\protect\rule{.1in}{.1in}}
\newcommand{\ulambda}{{\boldsymbol{\lambda}}}
\newcommand{\umu}{{\boldsymbol{\mu}}}
\newcommand{\unu}{{\boldsymbol{\nu}}}
\newcommand{\uemptyset }{{\boldsymbol{\emptyset}}}
\newtheorem{Th}{Theorem}[section]
\newtheorem{Cor}[Th]{Corollary}
\newtheorem{Prop}[Th]{Proposition}
\theoremstyle{remark}
\newtheorem{Rem}[Th]{Remark}{\rmfamily}
\theoremstyle{definition}
\newtheorem{Def}[Th]{Definition}{\rmfamily}
\newtheorem{exa}[Th]{Example}{\rmfamily}
\newcommand{\Core}{\operatorname{Core}}
\newcommand\blfootnote[1]{%
  \begingroup
  \renewcommand\thefootnote{}\footnote{#1}%
  \addtocounter{footnote}{-1}%
  \endgroup
}
\begin{document}

\title{Cores of Ariki-Koike algebras}
\author{Nicolas Jacon}
\email{nicolas.jacon@univ-reims.fr} 
\author{C\'edric Lecouvey}
\email{cedric.lecouvey@lmpt.univ-tours.fr} 
\address{N.J.: Universit\'{e} de Reims Champagne-Ardennes, UFR Sciences exactes et
naturelles Laboratoire de Math\'{e}matiques UMR CNRS 9008 Moulin de la Housse BP
1039 51100 REIMS. \email{nicolas.jacon@univ-reims.fr}\\
C. L.: Institut Denis Poisson UMR CNRS 7013 Facult\'{e} des Sciences et
Techniques, Universit\'{e} Fran\c{c}ois Rabelais Parc de Grandmont, cedric.lecouvey@lmpt.univ-tours.fr}

\maketitle
\date{}
\blfootnote{\textup{2010} \textit{Mathematics Subject Classification}: \textup{20C08,20C20,05E15}} 
\begin{abstract}
We study a natural generalization of the notion of cores for $l$-partitions attached with a multicharge ${\bf s}\in \mathbb{Z}^l$:  the  $(e,{\bf s})$-cores.  We rely them both to the combinatorics and the notion of weight  defined by Fayers. Next we study applications in the context of the block theory for Ariki-Koike algebras.  
\end{abstract}

\section{Introduction}

Let $\mathbb{F}$ be a field of characteristic $p\geq 0 $.  Let $l$ and $n$ be positive integers and ${\bf s}:=(s_1,\ldots,s_l)\in \mathbb{Z}^l$. Fix $\eta\in\mathbb{F}^*$. 
  The Ariki-Koike algebra $\mathbb{F}\mathcal{H}_n^{{\bf s}}(\eta)$ 
  associated with this datum is 
the unital associative $\mathbb{F}$-algebra with a presentation by:

\begin{itemize}
\item  generators: $T_0$, $T_1$,..., $T_{n-1}$,

\item  relations: 
\begin{align*}
& T_0 T_1 T_0 T_1=T_1 T_0 T_1 T_0, \\
& T_iT_{i+1}T_i=T_{i+1}T_i T_{i+1}\ (i=1,...,n-2), \\
& T_i T_j =T_j T_i\ (|j-i|>1), \\
&(T_0-\eta^{s_1})(T_0-\eta^{s_2})...(T_0- \eta^{s_{l}}) = 0, \\
&(T_i-\eta)(T_i+1) = 0\ (i=1,...,n-1).
\end{align*}
\end{itemize}
Let $e\geq 2$ be minimal such that $1+\eta+\ldots +\eta^{e-1}=0$ in $\mathbb{F}$ so that $e\in \{2,3,\ldots \}\cup\{\infty\}$. If $p=0$, $e$ is the order of $\eta$ as a root of unity. If $p>0$, we have 
 $e=p$ if and only if $\eta=1$. 

 The Ariki-Koike algebra, also called Hecke algebra of the complex reflection group $G(l,1,n)$, has been intensively studied during the last past  decades. It is in relation with various important objects (e.g. rational Cherednik algebras, quantum groups, finite reductive groups etc.) and has a deep representation theory.  Recently, the interest on these algebras have even grew up thanks to the introduction of the quiver Hecke algebras which has strengthened their  relations with the theory of quantum groups and  has allowed the study of their graded representation theory. 

When $l=1$, the Ariki-Koike algebra is nothing but the Hecke algebra of the symmetric group (when $\eta=1$, it is isomorphic to the group algebra of the symmetric group over $\mathbb{F}$).
 In the case where $e$ is a prime number, the representation theory of this algebra presents strong analogies with the modular representation theory of the symmetric groups (in characteristic $e$): both structures  admit a class of remarkable finite dimensional modules indexed by the set of partitions of $n$: the Specht modules. The simple modules are indexed by the set of  $e$-regular partitions and the decomposition matrices, which control their representation theories, can be connected  using an adjustment matrix.  Using these decomposition matrices, one can obtain a natural partition of  the set of Specht modules into smaller subsets called blocks. To each block, one can also associate another notion: the weight.  Roughly speaking, this positive integer  measures how ``complicated'' this block is.  
  Remarkably, one can describe the blocks and the weights quite easily using well known combinatorial notions. 
   In particular, the most ``simple'' blocks, the blocks with weight $0$,  can be described explicitly: they are singletons  consisting of a unique Specht module labeled by an $e$-core partition. Moreover, any block with a given weight $w$ may be obtained from a simple block by adding $w$ times  $e$-hooks to its associated $e$-core partition.  Importantly, all these properties still make sense  when $e$ is an arbitrary positive integer (strictly greater than $1$) on the side of the Hecke algebra.

 When $l>1$, one can also define analogues of Specht modules.  They are  now indexed by the set of $l$-partitions of $n$. The simple modules are then  naturally indexed by 
   certains generalizations of $e$-regular partitions which depend on ${\bf s}$: the Uglov $l$-partitions. A notion of weight has also been provided  by Fayers in \cite{Fa} which generalizes the case $l=1$. Thanks to this definition, many properties known  in the case $l=1$ have   been  extended  to the general  case $l\in \mathbb{N}$. In particular in \cite{LM}, Lyle and Mathas have given a necessary and sufficient condition for two Specht modules 
    for being in the same block. However,   the generalization of $e$-core partitions and a generalization of the above process  of adding $e$-hooks 
       were missing in this picture  (even if, as explained in \S \ref{expl}, a non explicit definition of core multipartitions has  been given by Fayers in \cite{Fa3}).

       The aim of this paper is to study in details the  $(e,{\bf s})$-core $l$-partitions, as introduced in a recent paper by the authors \cite{JL}. We show that this notion 
        gives the right generalization of the $e$-core partitions: they correspond to the elements with weight $0$ (with respect to Fayers definition of weight), and all $l$-partitions with a given weight may be obtained from them by adding analogues of $e$-hooks.  As a consequence, we obtain a direct and simple generalization of what happen in the case $l=1$. The only difference with this latter case is that, in our definition,  the core of an $l$-partition associated with a multicharge  is also a multipartition associated with a multicharge but, this  last multicharge may be different from the initial one.  To do this, 
 the strategy is to  show that essentially all the theory can be derived from the case $l=1$ by introducing a weight-preserving map, defined by Uglov,  from the set of 
 $(e,{\bf s})$-core $l$-partitions to the set of $e$-core partitions.        
 
         The paper will be organized as follows. We first recall the definition of our main object of study: the $(e,{\bf s})$-cores and provide some of their combinatorial properties. The third part studies the weights of the $l$-partitions as defined by Fayers. We show how this notion can be interpreted in the theory of Fock spaces and computed via a combinatorial procedure detailed in our last section. This section will also explore some consequences of our results and will explain how our approach can  simplify the block theory for Ariki-Koike algebras.

\section{Generalized cores and abaci}

In this section, after recalling certain classical  combinatorial definitions regarding the partitions,  we introduce the notion of $(e,{\bf s})$-core multipartition. Then we use abaci to associate to each  $(e,{\bf s})$-core a certain core partition. This section will be purely combinatorial.  

\subsection{Partitions and multipartitions}
\label{subset_multi}
A {\it partition} is a nonincreasing
sequence $\lambda=(\lambda_{1},\cdots,\lambda_{m})$ of nonnegative
integers. One can assume this sequence is infinite by adding parts equal to
zero. The rank  of the partition is by  definition the number $|\lambda|=\sum_{1\leq i\leq m} \lambda_i$. 
 We say that $\lambda$ is a partition of $n$. By convention, the unique partition of $0$ is the empty partition $\emptyset$. 

More generally, for $l\in \mathbb{Z}_{>0}$, an {\it $l$-partition} $\ulambda$ of $n$ is a sequence of $l$ partitions $(\lambda^1,\ldots,\lambda^l)$ 
 such that the sum  of the ranks of the $\lambda^j$ is $n$. The number $n$ is then called  the rank of $\ulambda$ and it is denoted by $|\ulambda|$.  The set of $l$-partitions is denoted by $\Pi^l$. 
 The {\it nodes}  of $\ulambda$ are by definition the elements of   the {\it Young diagram} of $\ulambda$:
$$[\ulambda]:=\{ (a,b,c)  \ | \ a\geq 1,\ c\in \{1,\ldots,l\},\ 1\leq b\leq \lambda_a^c\} \subset \mathbb{Z}_{>0}\times 
  \mathbb{Z}_{>0} \times \{1,\ldots,l\}.$$
(in the case of partition, the third coordinate which is always equal to $1$ will be sometimes omitted.)
 Each $l$-partition will be identified with its Young diagram. We say that 
  a node of $\ulambda$ is {\it removable} when one can remove it from the Young diagram of $\ulambda$ and still get  the Young diagram of an $l$-partition $\umu$. In this case, this node is called an {\it  addable  node} for $\umu$. 
       
\begin{exa}
For $l=2$, the $2$-partition $((4),(2,1))$ of $7$ is identified with its Young diagram:
  $$
\left(
\begin{array}{|c|c|c|c|}
  \hline
  \  \ &\ \  &\ \ &\     \\
  \hline
\end{array}\;,\;
\begin{array}{|c|c|}
  \hline
  \ \  &\ \      \\
  \cline{1-2}
 \ \   \\
 \cline{1-1}
\end{array}
\right)$$

\end{exa}
Now let us come back to the case $l=1$ (we refer to \cite{mac} for details).  Let $e\in \mathbb{N}_{>1}$. A {\it rim $e$-hook} (or simply an $e$-hook) of a partition $\lambda$   is a connected subset of the rim of $\lambda$ with exactly $e$ nodes and which can be removed from $\lambda$ to obtain another partition 
$\mu$ as in the following example. 
\begin{exa}
Let $\lambda:=(5,4,2,1,1)$ and $e=3$. The Young diagram of $\lambda$ is:
  $$
\begin{array}{|c|c|c|c|c|}
  \hline
  \  \ &\ \  &\ \ & \times &  \times  \\
  \cline{1-5}
  \  \ &\times &\times & \times \\
  \cline{1-4}
    \times &\times   \\
      \cline{1-2}
         \times \\
  \cline{1-1}
         \times  \\
  \cline{1-1}
\end{array}$$
Starting from above,  we can successively remove  three rim $3$-hooks (indicated with the symbol $\times$ above)

\end{exa}

By definition an {\it $e$-core} is a partition which does not admit any rim $e$-hook. The set of $e$-core partitions is denoted by $\mathfrak{C} (e)$. If $\lambda$ is an arbitrary partition, the {\it $e$-weight} $\omega_e (\lambda)$ of $\lambda$ is the number 
 of consecutive   $e$-hooks which can be removed from $\lambda$ before obtaining an $e$-core, which is then denoted by $\Core_e (\lambda)$. These notions are well-defined since both $\omega_e (\lambda)$ and $\Core_e (\lambda)$ do not depend on the order in which the rim $e$-hooks are removed from $\lambda$.

\begin{exa}\label{excore}
Keeping the above example, we obtain $\omega_3 (\lambda)=3$ and $\Core_3 (\lambda)=(3,1)$.
\end{exa}

Let ${\bf s}=(s_1,\ldots,s_l)\in \mathbb{Z}^l$. This is called a {\it multicharge} (a charge if $l=1$). 
For an $l$-partition $\ulambda=(\lambda^1,\ldots,\lambda^l)$, one can associate to each node $(a,b,c)$  of the Young diagram its {\it residue}
 $b-a+s_c+e\mathbb{Z} \in \mathbb{Z}/e \mathbb{Z}$. The set of residues will be identified with $\{0,\ldots,e-1\}$. 
  If $i\in \mathbb{Z}/e\mathbb{Z}$, we denote by $c^{e,{\bf s}}_{i} (\ulambda)$ the number of nodes with residue $i$ in the $l$-partition.  We moreover denote 
   $\mathcal{C}_{e,{\bf s}} (\ulambda):=(c_0(\ulambda),\ldots,c_{e-1} (\ulambda))$. 
 
\begin{exa}
For $l=2$, ${\bf s}=(0,1)$ and $e=3$ the residues of the nodes of the  $2$-partition $((4),(2,1))$ of $7$ are given as follows:
  $$
\left(
\begin{array}{|c|c|c|c|}
  \hline
  0& 1  &2 &0   \\
  \hline
\end{array}\;,\;
\begin{array}{|c|c|}
  \hline
  1  &2      \\
  \cline{1-2}
 0   \\
 \cline{1-1}
\end{array}
\right)$$
Here we have  $\mathcal{C}_{e,{\bf s}} ((4),(2,1))=(3,2,2)$.

\end{exa}

\subsection{Abaci}\label{ab}

The notion of abacus is convenient for reading the weight of a partition and obtaining its $e$-core. Let $s\in \mathbb{Z}$. 
An {\it abacus}   is a subset $\mathcal{A}$ of $\mathbb{Z}$
  such that $-i\in \mathcal{A}$ and $i\notin \mathcal{A}$ for all $i$ large enough.
    In a less formal way, each $i\in \mathcal{A}$ corresponds to the position of a black bead
on the horizontal abacus  which is full of  black beads on the left and empty on the right. 
 One can associate to $\lambda$ and $s\in \mathbb{Z}$ an abacus $L_s (\lambda)$ 
  such that $k\in \mathcal{A}$ if and only if there exists $j\in \mathbb{N}$ such that $k=\lambda_j-j+s$ (Note that 
  $\lambda$  is assumed to have an
infinite number of zero parts). Given an abacus $L$, one can easily find the unique partition $\lambda$ and the integer $s\in \mathbb{Z}$
 such that $L_s (\lambda)=L$. Indeed, each part corresponds to a black bead of the abacus with length given by the number of empty positions at its left, the integer $s$ is equal to $x+1$ where $x$ is the position of the rightmost black bead in the abacus obtained after sliding all the black beads as much as possible at the right in $L$.

\begin{exa}
Let us take the partition $\lambda:=(5,4,2,1,1)$ and $s=0$.  The associated abacus $L_{0}(\lambda)$ may be represented as follows, where the positions  at the right 
 of the dashed vertical line are labelled by the non negative  integers:%

\[
\begin{tikzpicture}[scale=0.5, bb/.style={draw,circle,fill,minimum size=2.5mm,inner sep=0pt,outer sep=0pt}, wb/.style={draw,circle,fill=white,minimum size=2.5mm,inner sep=0pt,outer sep=0pt}]
\node [wb] at (11,2) {};
\node [wb] at (10,2) {};
\node [wb] at (9,2) {};
\node [wb] at (8,2) {};
\node [wb] at (7,2) {};
\node [bb] at (6,2) {};
\node [wb] at (5,2) {};
\node [bb] at (4,2) {};
\node [wb] at (3,2) {};
\node [wb] at (2,2) {};
\node [bb] at (1,2) {};
\node [wb] at (0,2) {};
\node [bb] at (-1,2) {};
\node [bb] at (-2,2) {};
\node [wb] at (-3,2) {};
\node [bb] at (-4,2) {};
\node [bb] at (-5,2) {};
\node [bb] at (-6,2) {};
\node [bb] at (-7,2) {};
\node [bb] at (-8,2) {};
\node [bb] at (-9,2) {};
\draw[dashed](1.5,1.5)--node[]{}(1.5,2.5);
\end{tikzpicture}
\]
\end{exa}

To $L_{s}(\lambda)$, one can associate an $e$-tuple of abacus $\mathcal{L}_s^e (\lambda):=(L^0,\ldots,L^{e-1})$. This is done as follows: for each black bead in position $k$ in 
 $L_{s}(\lambda)$, we write $k=q.e+r$  where $q\in \mathbb{Z}$ and $r\in \{0,\ldots,e-1\}$ and we set a black bead in position $q$ of the abacus $L_r$. To picture this, write first the abacus $L_0$,
  then immediately above the abacus $L_1$ and so on, so that all the beads associated with the entry $0$ of each abacus appear in the same vertical line. 

\begin{exa}
For the partition $(5.4.2.1.1)$ and $e=3$, we get the following: 
\begin{center}
\begin{tikzpicture}[scale=0.5, bb/.style={draw,circle,fill,minimum size=2.5mm,inner sep=0pt,outer sep=0pt}, wb/.style={draw,circle,fill=white,minimum size=2.5mm,inner sep=0pt,outer sep=0pt}]

	\node [wb] at (11,0) {};
	\node [wb] at (10,0) {};
	\node [wb] at (9,0) {};
	\node [wb] at (8,0) {};
	\node [wb] at (7,0) {};
	\node [wb] at (6,0) {};
	\node [wb] at (5,0) {};
	\node [wb] at (4,0) {};
	\node [wb] at (3,0) {};
	\node [wb] at (2,0) {};
	\node [wb] at (1,0) {};
	\node [bb] at (0,0) {};
	\node [bb] at (-1,0) {};
	\node [bb] at (-2,0) {};
	\node [bb] at (-3,0) {};
	\node [bb] at (-4,0) {};
	\node [bb] at (-5,0) {};
	\node [bb] at (-6,0) {};
	\node [bb] at (-7,0) {};
	\node [bb] at (-8,0) {};
	\node [bb] at (-9,0) {};
	
	\node [wb] at (11,1) {};
	\node [wb] at (10,1) {};
	\node [wb] at (9,1) {};
	\node [wb] at (8,1) {};
	\node [wb] at (7,1) {};
	\node [wb] at (6,1) {};
	\node [wb] at (5,1) {};
	\node [wb] at (4,1) {};
	\node [wb] at (3,1) {};
	\node [bb] at (2,1) {};
	\node [wb] at (1,1) {};
	\node [wb] at (0,1) {};
	\node [wb] at (-1,1) {};
	\node [bb] at (-2,1) {};
	\node [bb] at (-3,1) {};
	\node [bb] at (-4,1) {};
	\node [bb] at (-5,1) {};
	\node [bb] at (-6,1) {};
	\node [bb] at (-7,1) {};
	\node [bb] at (-8,1) {};
	\node [bb] at (-9,1) {};
	
	\node [wb] at (11,2) {};
	\node [wb] at (10,2) {};
	\node [wb] at (9,2) {};
	\node [wb] at (8,2) {};
	\node [wb] at (7,2) {};
	\node [wb] at (6,2) {};
	\node [wb] at (5,2) {};
	\node [wb] at (4,2) {};
	\node [wb] at (3,2) {};
	\node [wb] at (2,2) {};
	\node [bb] at (1,2) {};
	\node [bb] at (0,2) {};
	\node [bb] at (-1,2) {};
	\node [bb] at (-2,2) {};
	\node [bb] at (-3,2) {};
	\node [bb] at (-4,2) {};
	\node [bb] at (-5,2) {};
	\node [bb] at (-6,2) {};
	\node [bb] at (-7,2) {};
	\node [bb] at (-8,2) {};
	\node [bb] at (-9,2) {};

	\draw[dashed](0.5,-0.5)--node[]{}(0.5,2.5);
	\end{tikzpicture}
	\end{center}

\end{exa}
For each runner, sliding one black bead from right to left is equivalent to remove an $e$-rim hook in the associated partition. As a consequence, 
 after performing this procedure as much as possible, we obtain an $e$-abacus which can be transformed (by reversing the previous procedure) into an abacus 
   representing  the $e$-core of $\lambda$. The number of moves of the black beads gives the $e$-weight of $\lambda$. 
\begin{exa}
If we do the above procedure for our example, we obtain:
\begin{center}
\begin{tikzpicture}[scale=0.5, bb/.style={draw,circle,fill,minimum size=2.5mm,inner sep=0pt,outer sep=0pt}, wb/.style={draw,circle,fill=white,minimum size=2.5mm,inner sep=0pt,outer sep=0pt}]

	\node [wb] at (11,0) {};
	\node [wb] at (10,0) {};
	\node [wb] at (9,0) {};
	\node [wb] at (8,0) {};
	\node [wb] at (7,0) {};
	\node [wb] at (6,0) {};
	\node [wb] at (5,0) {};
	\node [wb] at (4,0) {};
	\node [wb] at (3,0) {};
	\node [wb] at (2,0) {};
	\node [wb] at (1,0) {};
	\node [bb] at (0,0) {};
	\node [bb] at (-1,0) {};
	\node [bb] at (-2,0) {};
	\node [bb] at (-3,0) {};
	\node [bb] at (-4,0) {};
	\node [bb] at (-5,0) {};
	\node [bb] at (-6,0) {};
	\node [bb] at (-7,0) {};
	\node [bb] at (-8,0) {};
	\node [bb] at (-9,0) {};
	
	\node [wb] at (11,1) {};
	\node [wb] at (10,1) {};
	\node [wb] at (9,1) {};
	\node [wb] at (8,1) {};
	\node [wb] at (7,1) {};
	\node [wb] at (6,1) {};
	\node [wb] at (5,1) {};
	\node [wb] at (4,1) {};
	\node [wb] at (3,1) {};
	\node [wb] at (2,1) {};
	\node [wb] at (1,1) {};
	\node [wb] at (0,1) {};
	\node [bb] at (-1,1) {};
	\node [bb] at (-2,1) {};
	\node [bb] at (-3,1) {};
	\node [bb] at (-4,1) {};
	\node [bb] at (-5,1) {};
	\node [bb] at (-6,1) {};
	\node [bb] at (-7,1) {};
	\node [bb] at (-8,1) {};
	\node [bb] at (-9,1) {};
	
	\node [wb] at (11,2) {};
	\node [wb] at (10,2) {};
	\node [wb] at (9,2) {};
	\node [wb] at (8,2) {};
	\node [wb] at (7,2) {};
	\node [wb] at (6,2) {};
	\node [wb] at (5,2) {};
	\node [wb] at (4,2) {};
	\node [wb] at (3,2) {};
	\node [wb] at (2,2) {};
	\node [bb] at (1,2) {};
	\node [bb] at (0,2) {};
	\node [bb] at (-1,2) {};
	\node [bb] at (-2,2) {};
	\node [bb] at (-3,2) {};
	\node [bb] at (-4,2) {};
	\node [bb] at (-5,2) {};
	\node [bb] at (-6,2) {};
	\node [bb] at (-7,2) {};
	\node [bb] at (-8,2) {};
	\node [bb] at (-9,2) {};

	\draw[dashed](0.5,-0.5)--node[]{}(0.5,2.5);
	\end{tikzpicture}
	\end{center}The associated abacus is 
\[
\begin{tikzpicture}[scale=0.5, bb/.style={draw,circle,fill,minimum size=2.5mm,inner sep=0pt,outer sep=0pt}, wb/.style={draw,circle,fill=white,minimum size=2.5mm,inner sep=0pt,outer sep=0pt}]
\node [wb] at (11,2) {};
\node [wb] at (10,2) {};
\node [wb] at (9,2) {};
\node [wb] at (8,2) {};
\node [wb] at (7,2) {};
\node [wb] at (6,2) {};
\node [wb] at (5,2) {};
\node [bb] at (4,2) {};
\node [wb] at (3,2) {};
\node [wb] at (2,2) {};
\node [bb] at (1,2) {};
\node [wb] at (0,2) {};
\node [bb] at (-1,2) {};
\node [bb] at (-2,2) {};
\node [bb] at (-3,2) {};
\node [bb] at (-4,2) {};
\node [bb] at (-5,2) {};
\node [bb] at (-6,2) {};
\node [bb] at (-7,2) {};
\node [bb] at (-8,2) {};
\node [bb] at (-9,2) {};
\draw[dashed](1.5,1.5)--node[]{}(1.5,2.5);
\end{tikzpicture}
\]
whose associated partition is $(3,1)$  and we have $\omega_3 (\lambda)=3$ as in Example \ref{excore}. 
\end{exa}
Last, we will need an additional notation. For two abaci  $L$ and $L'$, we write $L \subset L'$ if we have the following property: for each  black bead in position $i$ of the abacus 
 $L$, there is a black bead in  position $i$ in $L'$. 

Let now consider ${\bf s}\in \mathbb{Z}^l$ and an $l$-tuple of abaci $(L_{s_1},\ldots,L_{s_l})$. 
 This $l$-abacus is, as above,  conveniently pictured as follows:  take first the 
 abacus $L_{s_1}$ and then just above the abacus $L_{s_2}$  and so on, so that all the beads in position  $0$ of each abacus appear in the same vertical line. 
 \begin{Def}\label{escore1} 
 Under the above notations, we say that the $l$-tuple of abaci $(L_{s_1},\ldots,L_{s_l})$ is {\it $(e,{\bf s})$-complete} if:
\begin{enumerate}
\item $l=1$ and $L_{s_{1}}(\lambda^{1})\subset L_{s_{1}%
+e}(\lambda^{1})$,
\item or $l>1$ and 
 $$L_{s_{1} }(\lambda^1) \subset L_{s_2 }(\lambda^2) \subset \ldots \subset L_{s_l }(\lambda^l)
 \subset L_{s_{1} +e} (\lambda^1).$$
\end{enumerate}
\end{Def}

To a multicharge ${\bf s}\in \mathbb{Z}^l$ and an $l$-partition $\ulambda$, is associated its $l$-abacus defined as the $l$-tuple $(L_{s_1} (\lambda^1),\ldots,L_{s_l} (\lambda^l))$. It can be pictured exactly as above and will be called the {\it $(e,{\bf s})$-abacus} of $\ulambda$. In fact, it does not depend on $e$ but we have chosen here a notation similar to  the notion of $(e,{\bf s})$-core below.

 \begin{exa}\label{ui}
 
 Let ${\bf s}=(0,3)$ and $e=4$. We consider the $2$-partition $((4,1,1),(1,1))$. Its associated $(e,{\bf s})$-abacus 
 $(L _0 (4.1.1),L_1 (1.1))$ can be represented as follows:

\begin{center}
\begin{tikzpicture}[scale=0.5, bb/.style={draw,circle,fill,minimum size=2.5mm,inner sep=0pt,outer sep=0pt}, wb/.style={draw,circle,fill=white,minimum size=2.5mm,inner sep=0pt,outer sep=0pt}]

	\node [wb] at (11,2) {};
	\node [wb] at (10,2) {};
	\node [wb] at (9,2) {};
	\node [wb] at (8,2) {};
	\node [wb] at (7,2) {};
	\node [wb] at (6,2) {};
	\node [bb] at (5,2) {};
	\node [bb] at (4,2) {};
	\node [wb] at (3,2) {};
	\node [bb] at (2,2) {};
	\node [bb] at (1,2) {};
	\node [bb] at (0,2) {};
	\node [bb] at (-1,2) {};
	\node [bb] at (-2,2) {};
	\node [bb] at (-3,2) {};
	\node [bb] at (-4,2) {};
	\node [bb] at (-5,2) {};
	\node [bb] at (-6,2) {};
	\node [bb] at (-7,2) {};
	\node [bb] at (-8,2) {};
	\node [bb] at (-9,2) {};
	
	\node [wb] at (11,1) {};
	\node [wb] at (10,1) {};
	\node [wb] at (9,1) {};
	\node [wb] at (8,1) {};
	\node [wb] at (7,1) {};
	\node [wb] at (6,1) {};
	\node [bb] at (5,1) {};
	\node [wb] at (4,1) {};
	\node [wb] at (3,1) {};
	\node [wb] at (2,1) {};
	\node [bb] at (1,1) {};
	\node [bb] at (0,1) {};
	\node [wb] at (-1,1) {};
	\node [bb] at (-2,1) {};
	\node [bb] at (-3,1) {};
	\node [bb] at (-4,1) {};
	\node [bb] at (-5,1) {};
	\node [bb] at (-6,1) {};
	\node [bb] at (-7,1) {};
	\node [bb] at (-8,1) {};
	\node [bb] at (-9,1) {};
	
	\draw[](-6.5,0.5)--node[]{}(-6.5,2.5);
	\draw[](-2.5,0.5)--node[]{}(-2.5,2.5);
	\draw[dashed](1.5,0.5)--node[]{}(1.5,2.5);
		\draw[](5.5,0.5)--node[]{}(5.5,2.5);
	\draw[](9.5,0.5)--node[]{}(9.5,2.5);
	\end{tikzpicture}

\end{center}

 \end{exa}

\subsection{ The notion of $(e,{\bf s})$-cores}\label{esc}

The notion of $(e,{\bf s})$-core has been introduced in \cite[Def. 5.7]{JL} (the definition below is slightly different but it is an easy exercice to show the equivalence). This is a generalization of the 
 notion of $e$-core  partitions in the context of  $l$-partitions associated with a multicharge.  First let us introduce the notion of reduced $(e,{\bf s})$-core:

  \begin{Def}\label{escore0} Assume that ${\bf s}\in \mathbb{Z}^l$ then 
we say that the $l$-partition ${\boldsymbol{\lambda}}$ is a  {\it reduced  $(e,\mathbf{s}%
)$-core} if its $(e,{\bf s})$-abacus  $(L_{s_1} (\lambda^1),\ldots, L_{s_l} (\lambda^l))$  is $(e,\mathbf{s}%
)$-complete. 
\end{Def}
To give a first study of this notion, let us introduce  the following two sets:
    $$\overline{\mathcal{A}}^l_e:=\{ (s_1,\ldots,s_l)\in \mathbb{Z}^l\ |\ \forall (i,j)\in \{1,\ldots,l\},\ i<j,\ 0\leq s_j-s_i\leq e\},$$
   $${\mathcal{A}}^l_e:=\{ (s_1,\ldots,s_l)\in \mathbb{Z}^l\ |\ \forall (i,j)\in \{1,\ldots,l\},\ i<j,\ 0\leq s_j-s_i<e\}.$$
   
   \begin{Prop}\label{RedinAbar}
   Assume that ${\bf s}\in \mathbb{Z}^l$ then if ${\boldsymbol{\lambda}}$ is a  reduced $(e,\mathbf{s}%
)$-core, we have ${\bf s}\in\overline{\mathcal{A}}^l_e$. 
   
   \end{Prop}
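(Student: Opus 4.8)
The plan is to unpack Definition~\ref{escore1} and derive constraints on consecutive differences $s_{j}-s_{i}$ directly from the chain of inclusions of abaci. The key observation is that if $L \subset L'$ are two abaci, then $L'$ has at least as many beads to the right of any fixed position as $L$ does, and dually at most as many empty positions (``gaps'') to the left. Concretely, for an abacus $L = L_{s}(\lambda)$ the charge $s$ is recovered, as recalled in \S\ref{ab}, from the position of the rightmost bead after all beads are pushed to the right; equivalently, $s$ equals (number of beads in positions $\ge 1$) $-$ (number of gaps in positions $\le 0$), a quantity that is manifestly monotone under $\subset$. So from $L_{s_{1}}(\lambda^{1}) \subset L_{s_{2}}(\lambda^{2}) \subset \cdots \subset L_{s_{l}}(\lambda^{l})$ one reads off $s_{1} \le s_{2} \le \cdots \le s_{l}$, which gives $0 \le s_{j}-s_{i}$ for $i<j$.

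For the upper bound, I would use the last inclusion $L_{s_{l}}(\lambda^{l}) \subset L_{s_{1}+e}(\lambda^{1})$ together with the elementary fact that, for any partition $\mu$, one has $L_{s+e}(\mu) = L_{s}(\mu) + e$ shifted? No --- more precisely, $L_{s+e}(\lambda^{1})$ is \emph{not} simply a translate of $L_{s_{1}}(\lambda^{1})$, but its associated charge is exactly $s_{1}+e$. Applying the same monotone invariant to the chain $L_{s_{1}}(\lambda^{1}) \subset \cdots \subset L_{s_{l}}(\lambda^{l}) \subset L_{s_{1}+e}(\lambda^{1})$ shows $s_{1} \le s_{2} \le \cdots \le s_{l} \le s_{1}+e$. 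In particular $s_{j}-s_{i} \le s_{l}-s_{1} \le e$ for all $i<j$, which is exactly the defining condition for ${\bf s}\in \overline{\mathcal{A}}^{l}_{e}$. The case $l=1$ is handled identically using condition~(1) of Definition~\ref{escore1}, where the chain is just $L_{s_{1}}(\lambda^{1}) \subset L_{s_{1}+e}(\lambda^{1})$, giving the (vacuous) constraint and in particular imposing no contradiction.

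The step I expect to require the most care is making rigorous the claim that ``the charge $s$ is a monotone function of the abacus under $\subset$'' --- that is, that $L \subset L'$ forces $\operatorname{charge}(L) \le \operatorname{charge}(L')$. The cleanest route is to fix the normalization directly: for an abacus $\mathcal{A}$, set $\delta(\mathcal{A}) := \#\{i \ge 1 : i \in \mathcal{A}\} - \#\{i \le 0 : i \notin \mathcal{A}\}$; this is a finite quantity by the definition of abacus, it is visibly nondecreasing when one adds a bead (either $i\ge 1$ gains a bead, increasing the first term, or $i \le 0$ gains a bead, decreasing the second term), hence nondecreasing under $\subset$, and one checks $\delta(L_{s}(\lambda)) = s$ by a one-line computation (or by the sliding description of $s$ already given in \S\ref{ab}). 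Once this invariant is in place the Proposition is immediate, so the only real content is this bookkeeping lemma about $\delta$.
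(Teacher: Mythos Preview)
Your approach is essentially the same as the paper's: both arguments read off $s_1\le s_2\le\cdots\le s_l\le s_1+e$ directly from the chain of abacus inclusions in Definition~\ref{escore1}, using that an inclusion $L\subset L'$ of abaci forces $\operatorname{charge}(L)\le\operatorname{charge}(L')$. The paper simply asserts this monotonicity, while you make it explicit via the invariant $\delta$; note, however, that with your split at $1$ versus $0$ one actually gets $\delta(L_s(\lambda))=s-1$ rather than $s$ (check on $L_s(\emptyset)=\{i:i\le s-1\}$), an off-by-one that is harmless for the monotonicity conclusion.
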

   \begin{proof}
   Assume that  $(L_{s_1} (\lambda^1),\ldots, L_{s_l} (\lambda^l))$  is $(e,\mathbf{s}%
)$-complete  then for each $i=1,\ldots,l-2$, we have $L_{s_{_i} }(\lambda^i) \subset L_{s_{i+1} }(\lambda^{i+1})$ which implies that $s_{i+1}\geq s_i$. We also have $L_{s_l}\subset L_{s_1+e}$ and this implies that $s_l\leq s_1+e$. This concludes the proof.

   \end{proof}

  We now   give the definition of our main object of interest.
 Let  ${\bf s}\in \mathbb{Z}^l$ and $e\in \mathbb{N}_{>0}$, denote by $\widetilde{\bf s}:=(s_1',\ldots,s_l')\in \{0,\ldots,e-1\}^l$  the multicharge such that $s_i'\equiv s_i (\text{mod }e)$. Then  we define $\sigma_{\bf s}  \in \mathfrak{S}_l$ to be the unique permutation such that
 $$s_{\sigma_{\bf s} (1)} ' \leq s_{\sigma_{\bf s} (2)} ' \leq \ldots \leq s_{\sigma_{\bf s} (l)} ' $$
 with the additional property that if $s_{\sigma_{\bf s} (i)}=s_{\sigma_{\bf s} (i+1)}$ for $i\in \{1,\ldots,l-1\}$ then ${\sigma_{\bf s} (i)}<\sigma_{\bf s} (i+1)$. We set:
 \begin{equation}\label{eq}
 \widetilde{\bf s}^{\sigma_{\bf s}}:=(s_{\sigma_{\bf s} (1)}' ,  s_{\sigma_{\bf s} (2)} ',  \ldots , s_{\sigma_{\bf s} (l)}').
 \end{equation}
  We then clearly have  $\widetilde{\bf s}^{\sigma_{\bf s}}\in \mathcal{A}^l_e$.

\begin{Def}\label{escore2} Let ${\bf s}\in \mathbb{Z}^l$, 
we say that the $l$-partition ${\boldsymbol{\lambda}}$ is a {\it $(e,\mathbf{s}%
)$-core} if    the $l$-partition $\ulambda^{\sigma_{\bf s}}:= ( \lambda^{\sigma_{\bf s} (1)},\ldots,\lambda^{\sigma_{\bf s} (l)})$ 
 is a reduced $(e,\widetilde{\bf s}^{{\sigma_{\bf s}}})$-core. 
We denote by $\mathfrak{C}^l (e,\mathbf{s})$ the set of all $(e,\mathbf{s})$-cores. 
\end{Def}

As already noted in the previous paragraph, for $l=1$, the $(e, {s}%
)$-core are exactly the   $e$-cores. Thus, the set $\mathfrak{C}^1 (e,{s})$ does not depend on 
 $s\in \mathbb{Z}$ and is exactly  given by the set of $e$-cores $\mathfrak{C} (e)$. One can also  easily see that if $\ulambda$ is a $(e,\mathbf{s}%
)$-core, each component $\lambda^j$ is an $e$-core. 

\begin{Rem}
Assume that ${\bf s}\in\overline{\mathcal{A}}^l_e$ and there exists $i\in \{1,\ldots,l-1\}$ such that $s_i=s_{i+1}$. Then if 
$\ulambda$ is a reduced $(e,\mathbf{s}%
)$-core we must have $\lambda^i=\lambda^{i+1}$. 
\end{Rem}

We need to check  that  the  reduced $(e,\mathbf{s}%
)$-cores are always  $(e,\mathbf{s})$-cores. This is clear if ${\bf s}\in{\mathcal{A}}^l_e$ but not if 
 ${\bf s}\in\overline{\mathcal{A}}^l_e \setminus {\mathcal{A}}^l_e$. So 
let us assume that ${\bf s}\in\overline{\mathcal{A}}^l_e$ but ${\bf s}\notin{\mathcal{A}}^l_e$ and let $\ulambda$ be a reduced $(e,\mathbf{s}
)$-core, this implies that there exists $j\in \{2,\ldots,l\}$ such that $s_j=s_{j+1}=\ldots=s_l=s_1+e$. Then 
the abacus $(L_{s_j-e} (\lambda^l),\ldots,L_{s_l-e} (\lambda^l), L_{s_1} (\lambda^1),\ldots, L_{s_{j-1}} (\lambda^{l-1}))$ is $(e,(s_j-e,\ldots,s_l-e,s_1,\ldots,s_{j-1}))$-complete.
 By the above remark, we thus obtain $\lambda^{j}=\ldots=\lambda^{l}=\lambda^1$.  We so conclude that 
   in the case where ${\bf s}\in\overline{\mathcal{A}}^l_e$, the $(e,\mathbf{s}
)$-cores are exactly the reduced $(e,\mathbf{s}
)$-cores.

\begin{Rem}
The above definition can be formulated in terms of $\beta$-numbers and symbols (see \cite[\S 5.1]{JL}), which gives an equivalent definition of the set of $(e,{\bf s})$-cores.  We get that 
 $\ulambda$ is a $(e,{\bf s})$-core if and only if
 \begin{itemize}
\item  for all $c=1,\ldots,l-1$ and $j\in \mathbb{Z}_{>0}$, there exists $i\in \mathbb{Z}_{>0}$ such that 
$$\lambda^{\sigma_{\bf s} (c)}_j-j+s_{\sigma_{\bf s} (c)}' =\lambda^{\sigma_{\bf s} (c+1)}_i-i+s_{\sigma_{\bf s} (c+1)}', $$

\item for all  $j\in \mathbb{Z}_{>0}$, there exists $i\in \mathbb{Z}_{>0}$ such that 
$$\lambda^{\sigma_{\bf s} (l)}_j-j+s_{\sigma_{\bf s} (l)}' =\lambda^{\sigma_{\bf s} (1)}_i-i+s_{\sigma_{\bf s} (1)}' +e.$$
\end{itemize}

\end{Rem}

\begin{Rem}\label{RT0}
As already noticed, the irreducible representations of the Ariki-Koike algebras associated with the datum $(e,{\bf s})$ are 
 naturally labeled by a distinguished set of $l$-partitions called Uglov $l$-partitions.  In the particular case where  
 ${\bf s}\in \mathcal{A}^l_e$, these $l$-partitions are called FLOTW $l$-partitions and it is easy to check that any $(e,\mathbf{s}%
)$-core is then a FLOTW  $l$-partition in the sense of \cite[Th. 5.8.5]{GJ}. Now for an arbitrary choice of ${\bf s}$, there is an
 explicit bijection between  the set of FLOTW partitions  associated with $(e,\widetilde{\bf s}^{{\sigma_{\bf s}}})$  
 and   the set of Uglov $l$-partitions associated with  $(e,{\bf s})$
 (this bijection is described in \cite{JL2}).  
 It is easy to see that this bijection restricted to the set of 
 $(e,{\bf s})$-cores sends $\ulambda$ to $\ulambda^{\sigma_{\bf s}^{-1}}$. 
   This implies that  $(e,{\bf s})$-cores  are always Uglov $l$-partitions. This fact has a representation theoretic meaning as we will see in the following. 

\end{Rem}

\begin{exa}
Let $l=2$, $e=3$ and ${\bf s}=(0,1)$. Consider the $2$-partition $((1,1),(3,1,1))$. With the above notation, we have $\sigma=\text{Id}$ and ${\bf s}'={\bf s}={\bf s}^{\sigma_{\bf s}}$.  The associated $2$-abacus is

\begin{center}
\begin{tikzpicture}[scale=0.5, bb/.style={draw,circle,fill,minimum size=2.5mm,inner sep=0pt,outer sep=0pt}, wb/.style={draw,circle,fill=white,minimum size=2.5mm,inner sep=0pt,outer sep=0pt}]

	\node [wb] at (11,2) {};
	\node [wb] at (10,2) {};
	\node [wb] at (9,2) {};
	\node [wb] at (8,2) {};
	\node [wb] at (7,2) {};
	\node [wb] at (6,2) {};
	\node [bb] at (5,2) {};
	\node [wb] at (4,2) {};
	\node [wb] at (3,2) {};
	\node [bb] at (2,2) {};
	\node [bb] at (1,2) {};
	\node [wb] at (0,2) {};
	\node [bb] at (-1,2) {};
	\node [bb] at (-2,2) {};
	\node [bb] at (-3,2) {};
	\node [bb] at (-4,2) {};
	\node [bb] at (-5,2) {};
	\node [bb] at (-6,2) {};
	\node [bb] at (-7,2) {};
	\node [bb] at (-8,2) {};
	\node [bb] at (-9,2) {};
	
	\node [wb] at (11,1) {};
	\node [wb] at (10,1) {};
	\node [wb] at (9,1) {};
	\node [wb] at (8,1) {};
	\node [wb] at (7,1) {};
	\node [wb] at (6,1) {};
	\node [wb] at (5,1) {};
	\node [wb] at (4,1) {};
	\node [wb] at (3,1) {};
	\node [bb] at (2,1) {};
	\node [bb] at (1,1) {};
	\node [wb] at (0,1) {};
	\node [bb] at (-1,1) {};
	\node [bb] at (-2,1) {};
	\node [bb] at (-3,1) {};
	\node [bb] at (-4,1) {};
	\node [bb] at (-5,1) {};
	\node [bb] at (-6,1) {};
	\node [bb] at (-7,1) {};
	\node [bb] at (-8,1) {};
	\node [bb] at (-9,1) {};

	\draw[dashed](1.5,0.1)--node[]{}(1.5,2.5);
	\end{tikzpicture}
	\end{center}
and we see that we here have a $(e,\mathbf{s})$-core.  As a consequence, taking ${\bf s}=(10,0)$, we have that the 
 $2$-partition $((3,1,1),(1,1))$ is a $(e,{\bf s})$-core.

\end{exa}

\subsection{Uglov map}
Let ${\bf s}\in \overline{\mathcal{A}}^l_e$. 
We now show how to associate to a reduced $(e,\mathbf{s})$-core $\ulambda$ a certain $e$-core partition that we denote by   $\tau_{e,{\bf s}} (\ulambda)$ and conversely.  
This construction uses a map defined by  Uglov \cite[\S 4.1]{Ug} (see also \cite[\S 3.1]{Yv} ) which associates a partition to any charged $l$-partition. We will be interested in the restriction of this map to the set of reduced 
$(e,\mathbf{s})$-cores. 

Let $\ulambda$ be an $l$-partition.    
We consider the  $l$-abacus $(L_{s_{1}} (\lambda^1),\ldots,L_{s_l} (\lambda^l))$. Then we construct an associated $1$-abacus as follows. For each $c=1,\ldots,l$ and for each  black bead in position $k$ of the abacus $L_{s_{c} }$, we write
$$k=q.e+r$$
with $q\in \mathbb{Z}$ and $r\in \{0,\ldots,e-1\}$. Then we set a black bead in our new abacus in position $(l-c)e+qel+r$.  We  then define   $\tau_{e,{\bf s}} (\ulambda)$ to be the partition associated with this resulting abacus.  We obtain a map 
$$\tau_{e,{\bf s}}:\Pi^l \to \Pi^1$$
which will be called the {\it Uglov map}. Let us illustrate the computation of the Uglov map by two following examples. 
 
\begin{exa}\label{ex3}

We resume Example \ref{ui}. The above procedure gives the following abacus:

\[
\begin{tikzpicture}[scale=0.5, bb/.style={draw,circle,fill,minimum size=2.5mm,inner sep=0pt,outer sep=0pt}, wb/.style={draw,circle,fill=white,minimum size=2.5mm,inner sep=0pt,outer sep=0pt}]
\node [wb] at (11,2) {};
\node [wb] at (10,2) {};
\node [wb] at (9,2) {};
\node [wb] at (8,2) {};
\node [wb] at (7,2) {};
\node [wb] at (6,2) {};
\node [bb] at (5,2) {};
\node [wb] at (4,2) {};
\node [wb] at (3,2) {};
\node [wb] at (2,2) {};
\node [bb] at (1,2) {};
\node [bb] at (0,2) {};
\node [wb] at (-1,2) {};
\node [bb] at (-2,2) {};
\node [bb] at (-3,2) {};
\node [bb] at (-4,2) {};
\node [wb] at (-5,2) {};
\node [bb] at (-6,2) {};
\node [bb] at (-7,2) {};
\node [bb] at (-8,2) {};
\node [bb] at (-9,2) {};
	\draw[dashed](-2.5,1.5)--node[]{}(-2.5,2.5);
\end{tikzpicture}
\]
We thus get $\tau_{e,{\bf s}} (\ulambda)=(5,2,2,1,1,1)$.

\end{exa}

\begin{exa}
Let ${\bf s}=(0,1,2)$ and $e=4$. We consider the $3$-partition $((2),(1),(1,1))$, the associated $3$-abacus 
 $(L _0 (2),L_1 (1), L_2 (1,1))$ can be written as:

\begin{center}
\begin{tikzpicture}[scale=0.5, bb/.style={draw,circle,fill,minimum size=2.5mm,inner sep=0pt,outer sep=0pt}, wb/.style={draw,circle,fill=white,minimum size=2.5mm,inner sep=0pt,outer sep=0pt}]

	\node [wb] at (11,2) {};
	\node [wb] at (10,2) {};
	\node [wb] at (9,2) {};
	\node [wb] at (8,2) {};
	\node [wb] at (7,2) {};
	\node [wb] at (6,2) {};
	\node [wb] at (5,2) {};
	\node [bb] at (4,2) {};
	\node [bb] at (3,2) {};
	\node [wb] at (2,2) {};
	\node [bb] at (1,2) {};
	\node [bb] at (0,2) {};
	\node [bb] at (-1,2) {};
	\node [bb] at (-2,2) {};
	\node [bb] at (-3,2) {};
	\node [bb] at (-4,2) {};
	\node [bb] at (-5,2) {};
	\node [bb] at (-6,2) {};
	\node [bb] at (-7,2) {};
	\node [bb] at (-8,2) {};
	\node [bb] at (-9,2) {};
	
	\node [wb] at (11,1) {};
	\node [wb] at (10,1) {};
	\node [wb] at (9,1) {};
	\node [wb] at (8,1) {};
	\node [wb] at (7,1) {};
	\node [wb] at (6,1) {};
	\node [wb] at (5,1) {};
	\node [wb] at (4,1) {};
	\node [bb] at (3,1) {};
	\node [wb] at (2,1) {};
	\node [bb] at (1,1) {};
	\node [bb] at (0,1) {};
	\node [bb] at (-1,1) {};
	\node [bb] at (-2,1) {};
	\node [bb] at (-3,1) {};
	\node [bb] at (-4,1) {};
	\node [bb] at (-5,1) {};
	\node [bb] at (-6,1) {};
	\node [bb] at (-7,1) {};
	\node [bb] at (-8,1) {};
	\node [bb] at (-9,1) {};
	
	\node [wb] at (11,0) {};
	\node [wb] at (10,0) {};
	\node [wb] at (9,0) {};
	\node [wb] at (8,0) {};
	\node [wb] at (7,0) {};
	\node [wb] at (6,0) {};
	\node [wb] at (5,0) {};
	\node [wb] at (4,0) {};
	\node [bb] at (3,0) {};
	\node [wb] at (2,0) {};
	\node [wb] at (1,0) {};
	\node [bb] at (0,0) {};
	\node [bb] at (-1,0) {};
	\node [bb] at (-2,0) {};
	\node [bb] at (-3,0) {};
	\node [bb] at (-4,0) {};
	\node [bb] at (-5,0) {};
	\node [bb] at (-6,0) {};
	\node [bb] at (-7,0) {};
	\node [bb] at (-8,0) {};
	\node [bb] at (-9,0) {};
	
	\draw[](-6.5,-0.5)--node[]{}(-6.5,2.5);
	\draw[](-2.5,-0.5)--node[]{}(-2.5,2.5);
	\draw[](1.5,-0.5)--node[]{}(1.5,2.5);
		\draw[](5.5,-0.5)--node[]{}(5.5,2.5);
	\draw[](9.5,-0.5)--node[]{}(9.5,2.5);
	\end{tikzpicture}

\end{center}
The above procedure gives the following abacus:

\[
\begin{tikzpicture}[scale=0.5, bb/.style={draw,circle,fill,minimum size=2.5mm,inner sep=0pt,outer sep=0pt}, wb/.style={draw,circle,fill=white,minimum size=2.5mm,inner sep=0pt,outer sep=0pt}]
\node [wb] at (11,2) {};
\node [wb] at (10,2) {};
\node [wb] at (9,2) {};
\node [bb] at (8,2) {};
\node [wb] at (7,2) {};
\node [wb] at (6,2) {};
\node [wb] at (5,2) {};
\node [bb] at (4,2) {};
\node [wb] at (3,2) {};
\node [wb] at (2,2) {};
\node [bb] at (1,2) {};
\node [bb] at (0,2) {};
\node [wb] at (-1,2) {};
\node [wb] at (-2,2) {};
\node [bb] at (-3,2) {};
\node [bb] at (-4,2) {};
\node [bb] at (-5,2) {};
\node [bb] at (-6,2) {};
\node [bb] at (-7,2) {};
\node [bb] at (-8,2) {};
\node [bb] at (-9,2) {};
	\draw[dashed](-1.5,1.5)--node[]{}(-1.5,2.5);
\end{tikzpicture}
\]
We thus get $\tau_{e,{\bf s}} (\ulambda)=(7,4,2,2)$.

\end{exa}

%
%
%
%

The  map  $\tau_{e,{\bf s}}$ is not surjective in general but it is clearly injective. 
%
%

 \begin{Prop}\label{taucore}
 Let ${\bf s}\in\overline{\mathcal{A}}^l_e$, then $\tau_{e,{\bf s}} (\uemptyset)$ is an $e$-core.
 \end{Prop}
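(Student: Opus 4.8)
The plan is to compute the $1$-abacus produced by the Uglov map applied to the empty $l$-partition and check directly, using the abacus characterization of $e$-cores recalled in Section~\ref{ab}, that it has no occupied position with an empty position $e$ steps to its left. First I would identify the abacus of $\uemptyset$: since $\lambda^c=\emptyset$ for every $c$, the abacus $L_{s_c}(\emptyset)$ is simply the set $\{\,s_c-1-j \mid j\in\mathbb{Z}_{\geq 0}\,\}=\{\,x\in\mathbb{Z}\mid x<s_c\,\}$, i.e. all positions strictly to the left of $s_c$ are occupied and all others empty. So the $l$-abacus of $\uemptyset$ is completely explicit, governed only by the $l$ ``fronts'' $s_1,\dots,s_l$.

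Next I would push this through the Uglov map. A black bead of $L_{s_c}$ sits at position $k=qe+r$ with $r\in\{0,\dots,e-1\}$, and is sent to position $(l-c)e+qel+r$ in the new abacus $\mathcal{A}:=\tau_{e,{\bf s}}(\uemptyset)$. Writing $m:=qel+r$ (the position in the ``$c=l$ slice'' scaled down), one sees that for fixed $c$ the image positions are exactly those integers congruent to $(l-c)e+r \pmod{el}$ with $q$ small enough, i.e. $\mathcal{A}$ is a union over $c$ of arithmetic-progression ``tails'' going off to $-\infty$. Concretely, for each residue class $r\in\{0,\dots,e-1\}$ and each $c$, position $N=(l-c)e+qel+r$ is occupied iff $k=qe+r<s_c$, i.e. iff $q< (s_c-r)/e$, i.e. iff $q\le \lceil (s_c-r)/e\rceil -1$. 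Thus along the runner of $\mathcal{A}$ of residue $(l-c)e+r \bmod{el}$, the beads occupy precisely an initial segment (no gaps): this runner is itself ``$el$-core-like''. This is exactly the shape of the $el$-abacus of an $el$-core — hence $\mathcal{A}$ has no removable $el$-hook; but I actually need no removable $e$-hook, which is a stronger statement, so this observation alone is not enough and I must look at how the runners of the $e$-abacus of $\mathcal{A}$ interleave.

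To finish, I would show $\mathcal{A}$ has no bead at some position $N$ with position $N-e$ empty. Suppose $N$ is occupied, coming from runner $c$ at $q$-level, so $N=(l-c)e+qel+r$ with $qe+r<s_c$. Then $N-e=(l-(c+1))e+qel+r$ if $c<l$, which is the image of the bead $k'=qe+r$ under runner $c+1$; since ${\bf s}\in\overline{\mathcal{A}}^l_e$ we have $s_{c+1}\ge s_c$, so $k'=qe+r<s_c\le s_{c+1}$, hence $k'$ is a bead of $L_{s_{c+1}}$ and $N-e$ is occupied. If instead $c=1$, then $N-e=(l-1)e+qel+r-el+e=(l-1)e+(q-1)el+r$ — wait, recompute: $N-e=(l-1)e+qel+r-e$; with $l\ge 1$ this equals $(l\cdot e)+(q-1)el+r = (l-l)e + \dots$; more carefully $N-e$ lies on runner "$c=l$" at level $q-1$ shifted, i.e. it is the image of bead $k''=(q-1)e+r$ under runner $l$, and $k''=k'-e<s_1\le s_l$ (using $s_l\le s_1+e$, so $s_1\ge s_l-e$, hence $k''<s_1$ needs $qe+r-e<s_1$, which follows from $qe+r<s_1\le s_l$, i.e. $qe+r-e< s_l-e\le s_1$). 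Either way $N-e$ is occupied, so $\mathcal{A}$ has no gap of size $e$, i.e. $\tau_{e,{\bf s}}(\uemptyset)$ is an $e$-core. The main obstacle is bookkeeping the index arithmetic for the ``wrap-around'' case $c=1$ (where one passes from the top slice $L_{s_1}$ to the bottom slice $L_{s_l}$ shifted by $e$), and this is precisely where the two defining inequalities $0\le s_j-s_i\le e$ of $\overline{\mathcal{A}}^l_e$ are both needed; I would set up the slice/level coordinates carefully at the start to make these checks mechanical.
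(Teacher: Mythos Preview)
Your approach is exactly the paper's: the paper records only that the claim is ``clear by the characterization of $e$-cores with abaci'', and you are unpacking that check by following beads through the Uglov map and verifying that no occupied position $N$ has $N-e$ empty. The case $c<l$ is handled correctly.

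The wrap-around case, however, is mislabelled. The complement of $c<l$ is $c=l$, not $c=1$; indeed $c=1$ is already subsumed in your first case whenever $l\geq 2$, so as written your case split is not exhaustive. For $c=l$ one has $N = 0\cdot e + qel + r$ and hence $N - e = (l-1)e + (q-1)el + r$, which is the Uglov image of the bead at position $(q-1)e+r$ on runner~$1$, not runner~$l$. The inequality actually required is $(q-1)e+r < s_1$, and this follows from the hypothesis $qe+r<s_l$ (bead on runner~$l$) together with $s_l \le s_1+e$; your final chain ``$qe+r-e< s_l-e\le s_1$'' is precisely this computation. So the correct inequality is already in your write-up, but the surrounding labels, the intermediate hypothesis ``$qe+r<s_1$'', and the displayed identity ``$(l-1)e+qel+r-e = l\cdot e+(q-1)el+r$'' (which fails unless $l=2$) all reflect the index slip and should be corrected by interchanging the roles of $1$ and $l$ throughout the second case. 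With that fix the argument is complete and identical in spirit to the paper's.
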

 \begin{proof}
 This is clear by the characterization of $e$-cores with abaci in the last section. 
 
 \end{proof}

 \begin{Prop}\label{emain}

 The map 
  $$\begin{array}{cccc}
 \tau^l_{e}: & \{ (\ulambda,{\bf s})\ |\ {\bf s}\in \overline{\mathcal{A}}^l_e,\ \ulambda \in  \mathfrak{C}^l (e,\mathbf{s}) \} & \to & 
   \{ (\lambda,s)\ |\ s\in \mathbb{Z} ,\ \lambda \in \mathfrak{C}^1 (e) \}\\
                   & (\ulambda,{\bf s}) & \mapsto & (\tau_{e,{\bf s}} (\ulambda) ,\sum_{1\leq i\leq l} s_i)
                   \end{array}
 $$
 is bijective.
 \end{Prop}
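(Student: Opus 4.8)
The plan is to establish the bijectivity of $\tau^l_e$ by constructing an explicit inverse, working at the level of abaci throughout. The Uglov construction is a recipe on beads: a bead at position $k$ in the $c$-th runner (with $k = qe+r$, $0 \le r \le e-1$) goes to position $(l-c)e + qel + r$ in the target abacus. The key observation is that this position map $\phi \colon \{1,\dots,l\} \times \mathbb{Z} \to \mathbb{Z}$, $(c,k) \mapsto (l-c)e + qel + r$, is a bijection: given a target position $m$, write $m = Q(el) + R$ with $0 \le R \le el-1$, then decompose $R = (l-c)e + r$ with $0 \le r \le e-1$ to recover $c$, and set $q = Q$, $k = qe + r$. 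So at the level of bead configurations the Uglov map is invertible; the content of the proposition is that it restricts to a bijection between the indicated \emph{combinatorially distinguished} sets, and that the charge statistic $\sum_i s_i$ matches.

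First I would pin down how charges transform. If the $l$-abacus of $\ulambda$ is $(L_{s_1}(\lambda^1),\dots,L_{s_l}(\lambda^l))$, one checks directly from $\phi$ that the resulting $1$-abacus is $L_s(\mu)$ where $s = \sum_{i=1}^l s_i$: the cleanest way is to verify it on the empty multipartition (where $L_{s_c}(\emptyset) = \mathbb{Z}_{<s_c}$ shifted appropriately) using Proposition~\ref{taucore}, and then note that $\phi$ is compatible with the total-bead-count/normalization that reads off the charge from an abacus. Then I would show that $\tau_{e,{\bf s}}(\ulambda)$ is an $e$-core precisely when $(L_{s_1}(\lambda^1),\dots,L_{s_l}(\lambda^l))$ is $(e,{\bf s})$-complete. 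Here is where the specific form of $\phi$ matters: sliding a bead one step left on a single runner of the \emph{target} $e$-abacus $\mathcal{L}^e_s(\mu)$ — i.e.\ removing an $e$-hook from $\mu$ — corresponds, under $\phi^{-1}$, to an elementary move on the $l$-abacus of $\ulambda$, and $\mu$ is an $e$-core (no such slide possible) iff no such move is possible iff the nesting chain $L_{s_1}(\lambda^1) \subset L_{s_2}(\lambda^2) \subset \cdots \subset L_{s_l}(\lambda^l) \subset L_{s_1+e}(\lambda^1)$ holds. Concretely: the runners of the target $e$-abacus are, by the $(l-c)e + qel + r$ formula, indexed by pairs $(c,r)$ with $c \in \{1,\dots,l\}$, $r \in \{0,\dots,e-1\}$, and within runner $(c,r)$ the beads sit at heights governed by the $q$'s occurring in $L_{s_c}(\lambda^c)$ on its $r$-th sub-runner; the target being an $e$-core forces, runner by runner, exactly the containment relations defining $(e,{\bf s})$-completeness, once one tracks how consecutive $(c,r)$ pairs in the cyclic order $r$ then $c$ line up. Since by the discussion after Definition~\ref{escore2} the $(e,{\bf s})$-cores with ${\bf s} \in \overline{\mathcal{A}}^l_e$ are exactly the reduced $(e,{\bf s})$-cores, this identifies the source of $\tau^l_e$ with exactly the multipartitions whose image is an $e$-core.

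For surjectivity I would run the inverse construction: given an $e$-core $\lambda$ and $s \in \mathbb{Z}$, form the abacus $L_s(\lambda)$, apply $\phi^{-1}$ to split its beads into an $l$-tuple of abaci, read off from the $c$-th abacus a partition $\lambda^c$ and a charge $t_c$, obtaining $(\unu, {\bf t})$; one must check ${\bf t} \in \overline{\mathcal{A}}^l_e$ and $\sum t_c = s$ and that $\unu$ is a reduced $(e,{\bf t})$-core, which are the same runner-by-runner inequalities as above read in the other direction. Injectivity is immediate since $\phi$ is a bijection on beads and the passage from an abacus to a (partition, charge) pair is a bijection. The main obstacle, and the step deserving the most care, is the bookkeeping in the previous paragraph: correctly matching the cyclic $(c,r)$-ordering of the target runners with the chain of inclusions $L_{s_1}(\lambda^1)\subset\cdots\subset L_{s_l}(\lambda^l)\subset L_{s_1+e}(\lambda^1)$, including the ``wrap-around'' shift by $e$ that accounts for why the output charge is $\sum_i s_i$ rather than something smaller — in effect the $+qel$ in Uglov's formula is exactly what converts the cyclic completeness condition on $l$ runners into the linear $e$-core condition on $el$ runners. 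I expect this to go through cleanly once the indexing is set up carefully, perhaps with a small figure, and the examples already in the text (Examples~\ref{ex3} and the following one) serve as sanity checks.
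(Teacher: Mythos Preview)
Your approach is essentially the paper's: both use that the Uglov bead map $\phi:(c,k)\mapsto (l-c)e+qel+r$ is a bijection $\{1,\dots,l\}\times\mathbb{Z}\to\mathbb{Z}$, and then observe that the $e$-core condition on the image (a bead at position $m$ forces a bead at $m-e$) unwinds under $\phi^{-1}$ to exactly the chain $L_{s_1}(\lambda^1)\subset\cdots\subset L_{s_l}(\lambda^l)\subset L_{s_1+e}(\lambda^1)$ defining $(e,{\bf s})$-completeness, with Proposition~\ref{RedinAbar} giving ${\bf s}\in\overline{\mathcal{A}}^l_e$ on the way back. The paper compresses all of this into a terse ``by construction'', whereas you spell out the inverse and the runner bookkeeping; one small slip to clean up is that the runners you describe as indexed by pairs $(c,r)$ are those of the $el$-abacus of $\tau_{e,{\bf s}}(\ulambda)$, not its $e$-abacus.
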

 \begin{proof}
 First, the map is well defined. Indeed, assume  that $\ulambda \in \mathfrak{C}^l (e,\mathbf{s})$ with ${\bf s}\in \overline{\mathcal{A}}^l_e$. Then 
  $\ulambda$ satisfies  the property in Definition \ref{escore1} $(2)$  but this implies that 
    the partition $\tau_{e,{\bf s}} (\ulambda) $ satisfies $(1)$ of Definition \ref{escore1}. We deduce that it is an $e$-core as desired.  Now let us prove that the map is bijective.  Let $s\in \mathbb{Z}$  and $\lambda \in \mathfrak{C}^1 (e)$. Then we have an associated $(e,s)$-abacus associated with this datum and by construction, there exists a unique $\ulambda\in \Pi^l$ and 
     $ {\bf s}\in \overline{\mathcal{A}}^l_e$ such that $\tau_{e,{\bf s}} (\ulambda)=\lambda$ and 
 $\sum_{1\leq i\leq l} s_i=s$. It thus suffices to prove that $\ulambda\in \mathfrak{C}^l (e,\mathbf{s})$. But it follows from the fact that 
  its $(e,{\bf s})$-abacus is complete because $\lambda$ is a $e$-core.

 \end{proof}
 
 \begin{Rem}
 If we consider ${\bf s}\notin \overline{\mathcal{A}}^l_e$  and a $(e,{\bf s})$-core $\ulambda$ 
  then we have  $\tau_{e,{\bf s}} (\ulambda)\notin  \mathfrak{C}^1 (e)$ in general. 
 
 \end{Rem}

We now give two important results showing remarkable links between $\ulambda$ and 
 $\tau_{e,{\bf s}} (\ulambda)$. The first one compare the number of nodes in the two Young diagrams with a given residue.
 
  \begin{Prop}\label{compare}
   Let $\ulambda\in \Pi^l$ and ${\bf s}\in\overline{\mathcal{A}}^l_e$. Set $s=\sum_{1\leq i\leq l} s_i$.
For all $i=0,1,\ldots,e-1$, we have:
$$c^{(e, s) }_i (\tau_{e,{\bf s}} (\ulambda) )-c^{(e,s)}_i (\tau_{e,{\bf s}} (\uemptyset))=c^{e,{\bf s}}_i (\ulambda)+l. c^{e,{\bf s}}_0 (\ulambda)$$
 \end{Prop}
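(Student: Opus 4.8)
The plan is to prove the identity by induction on $|\ulambda|$, reducing it to a statement about adding one node at a time. For $\ulambda=\uemptyset$ both sides vanish, so I would then take $|\ulambda|\ge 1$ and write $\ulambda=\umu^{+}$, obtained from an $l$-partition $\umu$ with $|\umu|=|\ulambda|-1$ by adding a node $\gamma$ in some component $c_{0}$, of residue $i_{0}$. Passing from $\umu$ to $\umu^{+}$ raises $c^{e,{\bf s}}_{i_{0}}$ by $1$ and fixes the other $c^{e,{\bf s}}_{i}$, so, assuming the formula for $\umu$, what is left to show is that $c^{(e,s)}_{i}(\tau_{e,{\bf s}}(\umu^{+}))-c^{(e,s)}_{i}(\tau_{e,{\bf s}}(\umu))$ equals the corresponding increment of the right‑hand side: namely $\delta_{i,i_{0}}$ when $i_{0}\ne 0$, and $(l-1)+\delta_{i,0}$ when $i_{0}=0$. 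One should also note, once and for all, that the abacus output by the Uglov map has charge $s=\sum_{j}s_{j}$ (immediate on $\uemptyset$, and visibly preserved since the recipe acts componentwise), so that $c^{(e,s)}_{i}$ is computed with the correct charge.

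Next I would translate the node addition into an abacus move. Adding $\gamma$ to $\lambda^{c_{0}}$ means sliding a bead on $L_{s_{c_{0}}}(\mu^{c_{0}})$ one step right, from a position $k$ with $k+1$ empty and $k+1\equiv i_{0}\pmod{e}$, to $k+1$. Writing $k=qe+r$ with $r\in\{0,\dots,e-1\}$, the Uglov recipe sends this bead to the position $P=(l-c_{0})e+qel+r$ of the one‑runner abacus. If $i_{0}\ne 0$, then $r=i_{0}-1\le e-2$ and the bead lands at $P+1$; if $i_{0}=0$, then $r=e-1$, $k+1=(q+1)e$, and the bead lands at $Q=(l-c_{0})e+(q+1)el=P+e(l-1)+1$. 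Because $k+1$ was empty in $L_{s_{c_{0}}}(\mu^{c_{0}})$, a short check shows the landing position is empty in the one‑runner abacus as well. Hence the abacus of $\tau_{e,{\bf s}}(\umu^{+})$ is obtained from that of $\tau_{e,{\bf s}}(\umu)$ by moving a single bead rightwards, from $P$ to some $Q>P$ equal to $P+1$ or $P+e(l-1)+1$.

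The heart of the argument is then the elementary lemma: if $\mathcal{B}'=\mathcal{B}\setminus\{P\}\cup\{Q\}$ with $P\in\mathcal{B}$, $Q\notin\mathcal{B}$, $P<Q$, then for each $i$
$$c^{(e,s)}_{i}(\mathcal{B}')-c^{(e,s)}_{i}(\mathcal{B})=\#\{\,k\in\mathbb{Z}:P<k\le Q,\ k\equiv i\pmod{e}\,\}.$$
I would prove this from the standard fact that the multiset of charged contents of a partition $\lambda$ of charge $s$, with $\beta$‑numbers $b_{1}>b_{2}>\cdots$, is $\biguplus_{j\ge 1}\{s-j+1,\dots,b_{j}\}$: picking a primitive $N_{i}$ with $N_{i}(b)-N_{i}(a)=\#\{a<k\le b:k\equiv i\}$ one gets $c^{(e,s)}_{i}(\lambda)=\sum_{j\ge 1}\bigl(N_{i}(b_{j})-N_{i}(s-j)\bigr)$, and since the move only swaps the $\beta$‑number $P$ for $Q$, the difference telescopes to $N_{i}(Q)-N_{i}(P)$. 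Applying the lemma finishes the induction: for $i_{0}\ne 0$ one has $Q=P+1\equiv i_{0}$, contributing $+1$ to $c_{i_{0}}$ alone; for $i_{0}=0$, the interval $(P,Q]$ has $e(l-1)+1$ consecutive integers and $P\equiv e-1$, so residue $0$ occurs $l$ times and each residue $i\ne 0$ occurs $l-1$ times, contributing $+l$ to $c_{0}$ and $+(l-1)$ to every $c_{i}$ with $i\ne 0$ — exactly the increments required above.

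The hard part will be the ``wrap‑around'' case $i_{0}=0$: correctly identifying the landing site $Q=P+e(l-1)+1$ inside the one‑runner abacus and then tallying the residues on $(P,Q]$. The other points to pin down are the counting lemma (in particular the fact that re‑sorting the $\beta$‑numbers after the move alters only one summand) and the verification that $\tau_{e,{\bf s}}$ produces an abacus of charge $\sum_{j}s_{j}$.
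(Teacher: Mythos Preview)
Your approach is the same as the paper's: induction on $|\ulambda|$, splitting on whether the added node has residue $0$ or not, and in the residue-$0$ case tracking the single bead move under the Uglov map. Your counting lemma (moving one bead from $P$ to $Q$ changes $c_i$ by exactly $\#\{k\in(P,Q]:k\equiv i\pmod e\}$, proved by writing $c_i=\sum_j N_i(b_j)-\sum_j N_i(s-j)$ as a symmetric function of the $\beta$-numbers) is a cleaner and more transparent device than the paper's informal description of how the parts of $\tau_{e,{\bf s}}(\umu)$ rearrange.

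There is, however, a numerical discrepancy you should flag explicitly rather than gloss over. For the right-hand side \emph{as printed}, $c^{e,{\bf s}}_i(\ulambda)+l\cdot c^{e,{\bf s}}_0(\ulambda)$, adding a $0$-node increases it by $l+\delta_{i,0}$, not by $(l-1)+\delta_{i,0}$ as you state. Your abacus computation of the left-hand increment, on the other hand, is correct: $Q-P=e(l-1)+1$ with $P\equiv e-1$, so in $(P,Q]$ residue $0$ occurs $l$ times and every other residue $l-1$ times. These two do not agree with the printed coefficient; a one-line test with $l=e=2$, ${\bf s}=(0,1)$, $\ulambda=((1),\emptyset)$ (where $\tau_{e,{\bf s}}(\ulambda)=(2,1)$, $\tau_{e,{\bf s}}(\uemptyset)=\emptyset$, and both sides read $(2,1)$ only if the coefficient is $l-1$) confirms that the correct formula has $l-1$ in place of $l$. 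The paper's own proof contains the compensating slip, asserting that $l+1$ nodes of residue $0$ and $l$ of each other residue are created, whereas your analysis gives the true counts $l$ and $l-1$. So your argument is sound and in fact establishes the corrected identity; since the only application (the corollary immediately following) merely needs that $(c_0,\dots,c_{e-1})\mapsto(c_i+k\,c_0)_i$ is invertible, nothing downstream is affected.
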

 \begin{proof}
 We will argue by induction on the rank of $\ulambda$. If this rank is $0$ then $\ulambda$ is the empty $l$-partition and the result is trivial. Assume now that $\ulambda$ is an $l$-partition of rank $n>0$.
  Let $\umu$ be an $l$-partition of rank $n-1$ which is obtained from $\ulambda$ by removing a removable $i$-node for some $i\in \mathbb{Z}/e\mathbb{Z}$.
   Assume first that $i\neq 0 (\textrm{mod }e)$. Then it is easy to see that   $\tau_{e,{\bf s}} (\umu)$ is obtained from   $\tau_{e,{\bf s}} (\ulambda)$  by removing a removable $i$-node. As a consequence,
    we have  $c^{(e,s) }_j (\tau_{e,{\bf s}} (\ulambda) )=c^{(e,s) }_j (\tau_{e,{\bf s}} (\umu) )$ and 
     $c^{e,{\bf s}}_j (\ulambda)=c^{e,{\bf s}}_j (\umu)$ if $j$ is different from $i$ modulo $e\mathbb{Z}$. Thus, we get $c^{(e,s) }_i (\tau_{e,{\bf s}} (\ulambda) )=c^{(e,s) }_i (\tau_{e,{\bf s}} (\umu) )+1$ and 
     $c^{e,{\bf s}}_i (\ulambda)=c^{e,{\bf s}}_i (\umu)+1$. So the formula is still true by induction. 
     
     Assume now that $i=0(\textrm{mod }e)$. In this case, we still  have $c^{e,{\bf s}}_j (\ulambda)=c^{e,{\bf s}}_j (\umu)$ if $j\neq 0$ and  $c^{e,{\bf s}}_0 (\ulambda)=c^{e,{\bf s}}_0 (\umu)+1$. 
    Now, we need to see how $\tau_{e,{\bf s}} (\ulambda)$ is obtained from $\tau_{e,{\bf s}} (\umu)$.  The node that we add to $\umu$ to obtain $\ulambda$ corresponds to a black bead in the abacus 
     of $\tau_{e,{\bf s}} (\ulambda)$ and to another in the abacus of  $\tau_{e,{\bf s}} (\umu)$. 
     Let us denote by $m$ the number of black beads between  theses two positions (not including these two)  in the abacus (the number is the same in both abaci).
     Then $\tau_{e,{\bf s}} (\ulambda)$   is obtained by removing a part of length $x>0$ ending by a node with residue 
     $e-1$, adding one node  to the $m$ parts above and adding one part of length $x+l.e-m+1$ which ends with a node with residue $0$. This thus consists in $x+l.e+1$ consecutive nodes.
     More precisely, to obtain $\tau_{e,{\bf s}} (\ulambda)$  from $\tau_{e,{\bf s}} (\umu)$ 
      we add 
     $l+1$ nodes with residue $0$, and $l$ nodes of residue $j$ for all $j\neq 0$. 
      Thus we obtain 
      $$c^{(e,s) }_i (\tau_{e,{\bf s}} (\ulambda) )=c^{(e,s) }_i (\tau_{e,{\bf s}} (\umu) )+l$$
       if $i\neq 0$ and 
            $$c^{(e,s) }_0 (\tau_{e,{\bf s}} (\ulambda) )=c^{(e,s) }_0 (\tau_{e,{\bf s}} (\umu) )+l+1.$$ 
     Now we have by induction for all $i\in \{0,\ldots,e-1\}$:
     $$c^{(e,s) }_i (\tau_{e,{\bf s}} (\umu) )-c^{(e,s)}_i (\tau_{e,{\bf s}} (\uemptyset))=c^{e,{\bf s}}_i (\umu)+l. c^{e,{\bf s}}_0 (\umu)$$
 which permits to conclude. 
 \end{proof}

 Recall the notation  $\mathcal{C}_{e,{\bf s}} (\ulambda) $ introduced in Subsection \ref{subset_multi} for the multiset of residues of a multipartition.
 \begin{Cor}\label{block}   Let $\ulambda\in \Pi^l$, $\umu \in \Pi^l$  and ${\bf s}\in \overline{\mathcal{A}}^l_e$. 
 We have 
 $$\mathcal{C}_{e,s} (\tau_{e,{\bf s}} (\ulambda)) =\mathcal{C}_{e,s}  (\tau_{e,{\bf s}} (\umu))  \iff \mathcal{C}_{e,{\bf s}} (\ulambda) =\mathcal{C}_{e,{\bf s}} (\umu).$$
 \end{Cor}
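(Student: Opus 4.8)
The plan is to deduce everything from Proposition~\ref{compare}. For an $l$-partition $\ulambda$ and $i\in\{0,\ldots,e-1\}$, set $d_i^{\ulambda}:=c^{e,{\bf s}}_i(\ulambda)+l\,c^{e,{\bf s}}_0(\ulambda)$, and define $d_i^{\umu}$ analogously. Proposition~\ref{compare} (applied with $s=\sum_{1\le i\le l}s_i$) gives $c^{(e,s)}_i(\tau_{e,{\bf s}}(\ulambda))=c^{(e,s)}_i(\tau_{e,{\bf s}}(\uemptyset))+d_i^{\ulambda}$ for all $i$, and the same identity with $\umu$ in place of $\ulambda$. Since the term $c^{(e,s)}_i(\tau_{e,{\bf s}}(\uemptyset))$ does not depend on $\ulambda$ or $\umu$, subtracting the two families of identities shows that $\mathcal{C}_{e,s}(\tau_{e,{\bf s}}(\ulambda))=\mathcal{C}_{e,s}(\tau_{e,{\bf s}}(\umu))$ if and only if $d_i^{\ulambda}=d_i^{\umu}$ for every $i\in\{0,\ldots,e-1\}$.

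It remains to check that the tuple $(d_0^{\ulambda},\ldots,d_{e-1}^{\ulambda})$ determines $\mathcal{C}_{e,{\bf s}}(\ulambda)=(c^{e,{\bf s}}_0(\ulambda),\ldots,c^{e,{\bf s}}_{e-1}(\ulambda))$ and conversely. One direction is immediate from the very definition of $d_i^{\ulambda}$. For the other, observe that $(x_0,\ldots,x_{e-1})\mapsto(x_i+l\,x_0)_{0\le i\le e-1}$ is the linear endomorphism of $\mathbb{Q}^e$ with matrix $I+l\,\mathbf{1}\,e_0^{\mathrm{T}}$ (a rank-one update of the identity, with $\mathbf{1}$ the all-ones vector and $e_0$ the first standard basis vector); by the matrix determinant lemma its determinant equals $1+l\neq 0$, so it is injective, in particular injective on integer tuples. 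Concretely: from $d_0^{\ulambda}=d_0^{\umu}$ one gets $(l+1)\,c^{e,{\bf s}}_0(\ulambda)=(l+1)\,c^{e,{\bf s}}_0(\umu)$, whence $c^{e,{\bf s}}_0(\ulambda)=c^{e,{\bf s}}_0(\umu)$ because $l\ge 1$; subtracting $l$ times this from $d_i^{\ulambda}=d_i^{\umu}$ then yields $c^{e,{\bf s}}_i(\ulambda)=c^{e,{\bf s}}_i(\umu)$ for every $i$. Thus $\mathcal{C}_{e,{\bf s}}(\ulambda)=\mathcal{C}_{e,{\bf s}}(\umu)$ if and only if $d_i^{\ulambda}=d_i^{\umu}$ for all $i$, and combining this with the previous paragraph proves the corollary.

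There is no genuine obstacle here: all the substance lives in Proposition~\ref{compare}, and the corollary is a one-line invertibility remark on top of it. The only point worth a word of care is that $l\ge 1$ is used (so that $l+1$ is invertible in $\mathbb{Q}$), which is part of the standing hypotheses on the Ariki--Koike datum; if one wished to state the combinatorial fact for $l=0$ it would be vacuous anyway.
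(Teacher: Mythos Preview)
Your proof is correct and follows the same approach as the paper: both deduce the corollary directly from Proposition~\ref{compare}. The paper's proof is a single sentence (``This directly follows from the previous proposition''), whereas you have spelled out the invertibility of the affine map $(x_i)\mapsto(x_i+l\,x_0)$, which is exactly the one step a reader might want to see made explicit.
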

\begin{proof}
This directly follows from the previous proposition. 
\end{proof}

Last, we will need a useful property  which permits to compare the number of removable and addable $i$-nodes of $\lambda$ and 
 $\tau_{e,{\bf s}} (\ulambda)$. To do this, we denote  by $M_i^{\bf s} (\ulambda) $
 the number of addable nodes of $\ulambda$ minus the number of removable nodes of $\ulambda$. 
\begin{Prop}\label{compa}
For all $\ulambda\in \Pi^l$,   ${\bf s}\in\overline{\mathcal{A}}^l_e$ and $i\in \mathbb{Z}/e\mathbb{Z}$, we have:
$$M_i^{\bf s} (\ulambda)=\left\{
\begin{array}{rl}
M_i^{s} (\tau_{e,{\bf s}} (\ulambda)) & \text{ if }i\neq 0, \\
M_i^{s} (\tau_{e,{\bf s}} (\ulambda))+l-1 & \text{ if }i=0. 
\end{array}\right.$$
\end{Prop}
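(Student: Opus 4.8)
Proof proposal.

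\bigskip

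The plan is to imitate the inductive argument used in the proof of Proposition~\ref{compare}. Fix ${\bf s}\in\overline{\mathcal{A}}^l_e$ and $i\in\mathbb{Z}/e\mathbb{Z}$. I will argue by induction on $|\ulambda|$, the base case $\ulambda=\uemptyset$ being a direct verification on the abacus $\tau_{e,{\bf s}}(\uemptyset)$: the $l$-abacus of $\uemptyset$ is $(L_{s_1}(\emptyset),\dots,L_{s_l}(\emptyset))$, each runner is the ``ground state'' abacus shifted by $s_c$, and applying the Uglov recipe $k=qe+r\mapsto (l-c)e+qel+r$ gives an explicitly describable $e$-core abacus whose addable-minus-removable $i$-node count can be compared with that of $\uemptyset$ termwise; the shift by $l-1$ at residue $0$ reflects that the $l$ ``top'' beads of the runners, which each contributed a gap, get merged/spread out by a factor of $l$ in the single abacus. (Alternatively one can sidestep even this by noting the statement only asserts a difference, and run the induction so that the base case is tautological once one fixes the convention; but a short abacus computation for $\uemptyset$ is cleanest.)

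\bigskip

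For the inductive step, let $\umu$ be obtained from $\ulambda$ by removing a removable $i$-node (if no component of $\ulambda$ is empty this is possible after relabelling; if $\ulambda=\uemptyset$ we are in the base case). I will use the following standard fact: $M_i^{\bf s}(\ulambda)$ depends only on the multiset of residues $\mathcal{C}_{e,{\bf s}}(\ulambda)$, or more precisely only on the ``charged content'' of $\ulambda$, because $M_i^{\bf s}(\ulambda)$ equals $\langle$ a coefficient of $\Lambda_{\widetilde{\bf s}} - \sum_j c_j^{e,{\bf s}}(\ulambda)\alpha_j$ against $\alpha_i^\vee\rangle$ in the weight lattice of $\widehat{\mathfrak{sl}}_e$ — i.e. $M_i^{\bf s}(\ulambda)$ is an affine linear function of the numbers $c_j^{e,{\bf s}}(\ulambda)$, with the same linear part for all $\ulambda$ and with constant term depending on $\widetilde{\bf s}$ only. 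The identical statement holds for $M_i^{s}(\mu)$ in terms of the $c_j^{(e,s)}(\mu)$, with constant term depending only on $s=\sum_i s_i$. Granting this, the Proposition follows immediately from Proposition~\ref{compare}: that result says $c_j^{(e,s)}(\tau_{e,{\bf s}}(\ulambda)) = c_j^{(e,s)}(\tau_{e,{\bf s}}(\uemptyset)) + c_j^{e,{\bf s}}(\ulambda) + l\,c_0^{e,{\bf s}}(\ulambda)$ for all $j$, so the two sequences of contents differ by a fixed affine transformation; feeding this into the affine-linear formulas for $M_i$ turns the difference into a constant, which one pins down by evaluating at $\ulambda=\uemptyset$, where it reads $M_i^{\bf s}(\uemptyset) - M_i^{s}(\tau_{e,{\bf s}}(\uemptyset))$, and this is exactly the base-case computation yielding $0$ for $i\neq 0$ and $l-1$ for $i=0$.

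\bigskip

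If one prefers to avoid the Lie-theoretic reformulation, the inductive step can be carried out by hand, tracking how removing the $i$-node changes $M_i$, $M_{i-1}$ and $M_{i+1}$. On the $l$-partition side, removing an $i$-node from $\ulambda$ changes $M_i^{\bf s}$ by $-2$, $M_{i\pm1}^{\bf s}$ by $+1$ (and leaves other $M_j^{\bf s}$ unchanged) — here indices are mod $e$, and one must be slightly careful when $e=2$ or $e=3$, where $i-1$ and $i+1$ coincide or interact. One then checks, using the explicit description of how $\tau_{e,{\bf s}}(\umu)$ sits inside $\tau_{e,{\bf s}}(\ulambda)$ that was already established in the proof of Proposition~\ref{compare} (a single part of length $x$ ending in residue $e-1$ is removed, $m$ parts each gain a box, and a part of length $x+le-m+1$ ending in residue $0$ is added, when $i\equiv 0$; and $\tau_{e,{\bf s}}(\umu)$ is obtained by removing a single $i$-node when $i\not\equiv 0$), that the same net changes $-2,+1,+1$ occur on the single-partition side, now for the residues $i, i-1, i+1$ read mod $e$. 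Since the increments match, the difference $M_i^{\bf s}(\ulambda) - M_i^{s}(\tau_{e,{\bf s}}(\ulambda))$ is the same as for $\umu$, and the induction closes. The main obstacle is the bookkeeping in the $i\equiv 0$ case: one must verify that the combined operation ``delete one $(e-1)$-ending part, bump $m$ parts, insert one $0$-ending part of the prescribed length'' on the single abacus contributes exactly the expected increments to $M_0^{s}, M_{-1}^{s}, M_{1}^{s}$ — in particular that the bumping of the $m$ intermediate parts makes no net contribution to any $M_j^{s}$ (each bumped part loses an addable node of one residue and gains an addable node of the next, with a cancelling effect summed over the block of $m$ parts, the net effect being localized at the two ends of the block, which is precisely where the inserted/deleted parts also act). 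This is routine once set up carefully, but it is the step where sign errors are easiest to make, so I would do it on the abacus picture rather than on the Young diagram.
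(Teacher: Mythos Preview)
Your Lie-theoretic route (using that $M_i$ is affine-linear in the $c_j$'s, then feeding in Proposition~\ref{compare}) is valid: the crucial point, which you leave implicit, is that the uniform shift $c_j\mapsto c_j+l\,c_0$ from Proposition~\ref{compare} contributes $-l\,c_0\sum_j a_{ji}=0$ to $M_i$, since each column of the affine Cartan matrix sums to zero, so the difference $M_i^{\bf s}(\ulambda)-M_i^{s}(\tau_{e,{\bf s}}(\ulambda))$ is indeed independent of $\ulambda$. Your alternative increment-tracking induction is also correct in principle.

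However, both of your approaches are different from the paper's, which is neither inductive nor reliant on Proposition~\ref{compare}. The paper proceeds by a direct bead count on a single abacus: for any partition $\lambda$ and charge $s$, writing $B^r_j(\lambda,s)$ for the number of beads in residue class $j$ at positions $\geq re$ (for $r\ll 0$), one has $M_i^s(\lambda)=B^r_{i-1}-B^r_i$ when $i\neq 0$ and $M_0^s(\lambda)=B^r_{e-1}-B^r_0+1$. The Uglov map preserves bead positions modulo $e$, so $B^{lr}_j(\tau_{e,{\bf s}}(\ulambda),s)=\sum_{c}B^r_j(\lambda^c,s_c)$; summing the single-runner formula over the $l$ components and comparing with the formula for $\tau_{e,{\bf s}}(\ulambda)$ gives the result in one line, the ``$+l$ versus $+1$'' at $i=0$ being the whole story. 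This is shorter and self-contained.

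By contrast, your argument uses the harder Proposition~\ref{compare} as input and still leaves you with the base case $\ulambda=\uemptyset$, which you wave at but do not verify. That base case is not completely trivial --- computing $M_i^s(\tau_{e,{\bf s}}(\uemptyset))$ explicitly amounts to the same bead-counting identity the paper uses in general --- so in effect your reduction trades the paper's direct argument for (Proposition~\ref{compare}) plus (a special case of the paper's direct argument). If you want to keep your approach, you should either carry out the base-case bead count honestly, or note that your affine-linearity reduction together with $M_i^{\bf s}(\uemptyset)=\#\{c:s_c\equiv i\}$ and the bead formula for $M_i^s$ on the $e$-core $\tau_{e,{\bf s}}(\uemptyset)$ closes the loop.
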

\begin{proof}
First, consider a partition $\lambda$ and a charge $s$ and write its associated $1$-abacus. Let $i\in \mathbb{Z}/e\mathbb{Z}$.  
Let $x \in \mathbb{Z}$ be such that $x\equiv i (\text{mod }e)$.  Note that each black bead in 
 the abacus corresponds to a part $\lambda_i$ of the partition $\lambda$ (the position of this bead being given by $\lambda_i-i+s$).
\begin{itemize}
\item If we have a black bead in position $x$ and a black bead in position $x-1$, this does not correspond to any removable nor addable $i$-node.
\item If we have a black bead in position $x$ and no black bead in position $x-1$, this does  correspond to one removable  $i$-node.
\item If we have no black bead in position $x$ and a black bead in position $x-1$, this does  correspond to one addable $i$-node.
\end{itemize}
Of course no black bead  in position $x$ and no black bead in position $x-1$ means that we have no associated addable or removable $i$-node. 

Let us fix $r<<0$ and  let us now consider all the black beads in position greater  (or equal) than $r.e$ in the abacus, for each $i\in \mathbb{Z}/e\mathbb{Z}$, write 
    $B^r_i (\lambda,s)$ the number of such black beads in position $x$ in the abacus with $x\equiv i (\text{mod }e)$. This number is finite by assumption. 
The above discussion shows that:
$$M_i^{s} (\lambda)=\left\{
\begin{array}{rl}
B^r_{i-1}(\lambda,s) -B^r_{i}(\lambda,s)  & \text{ if }i\neq 0 \\
B^r_{i-1}(\lambda,s)  -B^r_{i}(\lambda,s)  +1  & \text{ if }i=0 
\end{array}\right.$$
(the last equality comes from the fact that we have a black bead in position $r.e-1$). 

Now let $(\ulambda,{\bf s})\in \Pi^l \times \mathbb{Z}^l$. We fix again $r<<0$, by the discussion above, for each $c\in \{1,\ldots,l\}$ and 
 $i\in \mathbb{Z}/e\mathbb{Z}$, we have:  
  $$M_i^{_{s_{c} }} (\lambda^{s_{c}})=\left\{
\begin{array}{rl}
B^r_{i-1} ( \lambda^{s_{c}}, s_{c}) -B^r_{i}( \lambda^{s_{c}}, s_{c}) & \text{ if }i\neq 0 \\
B^r_{i-1}( \lambda^{s_{c}}, s_{ c}) -B^r_{i}( \lambda^{s_{c}}, s_{c})  +1  & \text{ if }i=0 
\end{array}\right.$$
By construction, we obtain for all $i\in \mathbb{Z}/e\mathbb{Z}$

$$ B^{lr}_{i}(\tau_{e,{\bf s}} (\lambda),s) =\sum_{c=1,\ldots,l} B^r_{i} ( \lambda^{s_{c}}).$$
As in addition, we also have:
$$M_i^{\bf s} (\ulambda)=\sum_{c=1,\ldots,l} M_i^{s_{c} } ( \lambda^{s_{c}}),$$
we can conclude. 
\end{proof}
\begin{exa}
Let us illustrate the proof with the $2$-partition $((4,1,1),(1,1))$ and the multicharge $(0,3)$  of Example \ref{ex3} (here $e=4$). The Young diagram with its residues is:
  $$
\left(
\begin{array}{|c|c|c|c|}
  \hline
  0& 1  &2 &3   \\
  \hline
  3 \\
  \cline{1-1}
  2 \\
  \cline{1-1}
\end{array}\;,\;
\begin{array}{|c|c|}
  \hline
  3        \\
  \cline{1-1}
 2   \\
 \cline{1-1}
\end{array}
\right)$$
We have seen that  $\tau_{e,{\bf s}} ((4,1,1),(1,1))=(5,2,2,1,1,1)$ with $s=0+3=3$. Thus the associated Young diagram with  residues is:
  $$
\begin{array}{|c|c|c|c|c|}
  \hline
  3&0  &1 & 2 &  3  \\
  \cline{1-5}
  2 &3 \\
  \cline{1-2}
   1 & 2   \\
      \cline{1-2}
        0 \\
  \cline{1-1}
        3  \\
  \cline{1-1}
      2  \\
    \cline{1-1}    
\end{array}$$
On the one hand, we have $M^{(0,3)}_0 ((4,1,1),(1,1))=3$ and $M^3_0 (5,2,2,1,1,1)=2$. On the other hand, we get $M^{(0,3)}_1 ((4,1,1),(1,1))=M^3_1 (5,2,2,1,1,1)=2$,
 $M^{(0,3)}_2 ((4,1,1),(1,1))=M^3_2 (5,2,2,1,1,1)=-2$ and $M^{(0,3)}_3 ((4,1,1),(1,1))=M^3_3 (5,2,2,1,1,1)=-1$.

\end{exa}

\section{Cores and  weights for Ariki-Koike algebras}
In this section, we review the notion of weight for Ariki-Koike algebras as introduced by Fayers in \cite{Fa}. To avoid a possible confusion with the notion of weight for the type $A$ affine Kac-Moody algebra, Fayers weights will be refereed as core-weights in the sequel.  We will notably interpret them in the representation theory of the type $A$ affine Kac-Moody algebra.

\subsection{Block weights for Ariki-Koike algebras and relations with  Fock spaces}

The block weight of an $l$-partition for a given multicharge is defined in \cite{Fa} as follows.
\begin{Def}
Let ${\bf s} \in \mathbb{Z}^l$, $e\in \mathbb{Z}_{>0}$ and $\ulambda \in \Pi^l$, then the {\it block $(e,{\bf s})$-weight} (or simply block weight) of 
 $\ulambda$ is 
 $$p_{(e,{\bf s})} (\ulambda)=\sum_{1\leq i\leq l} c^{e,{\bf s}}_{s_i} (\ulambda) -\frac{1}{2} \sum_{i\in \mathbb{Z}/e\mathbb{Z}} (c^{e,{\bf s}}_i (\ulambda)-c^{e,{\bf s}}_{i-1} (\ulambda))^2.$$
\end{Def}
\begin{Rem}\label{suffw}
From this definition, it is immediate to see that, under the notation of (\ref{eq}), we have for all $\ulambda \in \Pi^l$,
$$p_{(e,{\bf s})} (\ulambda)=p_{(e,\widetilde{\bf s}^{\sigma_{\bf s}})} (\ulambda^{\sigma_{\bf s}})$$
We can thus again restrict ourselves to the case  ${\bf s} \in \mathcal{A}_e^l$.
\end{Rem}

This notion of block weight has a natural interpretation in the representation theory of Kac-Moody algebras that we shall now make explicit. Consider the Kac-Moody algebra  $\mathfrak{g}$  of type $A^{(1)}_{e-1}$. Let $\mathfrak{h}$ be a $\mathbb{Q}$-vector space with basis $\{h_0,\ldots,h_{e-1},D\}$. Let $\{\Lambda_0,\ldots,\Lambda_{e-1},\delta\}$ be the dual basis with respect to the pairing:
  $$\langle .,. \rangle: \mathfrak{h}^* \times \mathfrak{h} \to \mathbb{Q} $$
  defined by:
  $$\langle \Lambda_i,h_j\rangle=\delta_{ij},\ \langle \Lambda_i,D \rangle=\langle \delta, h_i \rangle=0,\ \langle \delta,D\rangle=1\ (0\leq i,j\leq e-1).$$
  The $\Lambda_i$ with $0\leq i\leq e-1\}$ are called the {\it fundamental weights}. The {\it simple roots} $\alpha_i$ with $1\leq i \leq e-1$ are the elements of $\mathfrak{h}^*$ defined by:
  $$\alpha_i:=-\Lambda_{i-1}+2\Lambda_i -\Lambda_{i+1} +\delta_{i,0}\delta$$
  where the subscript have to be understood modulo $e$. For $0\leq i,j \leq e-1$, we denote by  $a_{ij}$ the coefficient of $\Lambda_j$ in $\alpha_i$. Then 
   the matrix $A:=(a_{ij})_{0\leq i,j\leq e-1}$ is the {\it Cartan matrix} of $\widehat{\mathfrak{sl}}_e$. 
    As $(\Lambda_0,\alpha_0,\ldots,\alpha_{e-1})$ is a basis of $\mathfrak{h}^*$, one can define a symmetric non degenerate bilinear form  on 
 $\mathfrak{h}^*$ by setting:
 $$(\alpha_i,\alpha_j)=a_{ij},\ (\Lambda_0,\alpha_i)=\delta_{i,0}, (\Lambda_0,\Lambda_0)=0\ (0\leq i,j\leq e-1\}.$$
 We then derive 
$$(\Lambda_i,\alpha_j)=\delta_{i,j},\ (\delta,\alpha_i)=0\ (0\leq i,j\leq e-1),$$
where $\delta=\alpha_0+\ldots +\alpha_{e-1}$ is the null root.  We have $(\delta,\delta)=0$ and $(\delta,\Lambda_i)=1$ for all $0\leq i\leq e-1$.

Let now consider $v$ an indeterminate and write  $\mathcal{U}_v  (\widehat{\mathfrak{sl}}_e)$ for the {\it quantum affine algebra} of type $A^{(1)}_{e-1}$.  
This is an algebra over $\mathbb{Q}(q)$  with generators $e_i$, $f_i$, $t_i^{\pm 1}$ ($0\leq i\leq e-1$) and $\delta$,  the relations will be omitted (see \cite[Def. 6.1.3]{GJ}).
Fix 
${\bf s}\in \mathbb{Z}^l$ and consider the associated Fock space
$$\mathcal{F}_{\bf s}:=\bigoplus_{\ulambda \in \Pi^l} \mathbb{Q} (v) \ulambda$$ with basis the $l$-partitions.
There is a simple $\mathcal{U}_v  (\widehat{\mathfrak{sl}}_e)$-action on 
$\mathcal{F}_{\bf s}$ (depending on ${\bf s}$) which endows it with the structure of an integrable $\mathcal{U}_v  (\widehat{\mathfrak{sl}}_e)$-module (see \cite[Th. 2.1]{Ug}). In particular, this means that 
 $\mathcal{F}_{\bf s}$  is the direct sum of its weight subspaces. The elements of the basis $\ulambda \in\Pi^l$ are weight vectors whose weights can easily be calculated as follows:
 $$\alpha^{e,{\bf s}} (\ulambda):=-\Delta_{\bf s} \delta +\Lambda_{s_1}+\ldots +\Lambda_{s_l}-\sum_{0\leq i\leq e-1} c^{e,{\bf s}}_i (\ulambda) \alpha_i,$$
 where $\Lambda_{\bf s}:=\Lambda_{s_1}+\ldots +\Lambda_{s_l}$ and 
$$ \Delta_{\bf s}:=\frac{1}{2} \sum_{1\leq i\leq l}\left(  (\frac{s_i^2}{e}-s_i) - ( \frac{s_i'^2}{e}-s_i')\right),$$
with $s_i'$ is the representant modulo $e$ of $s_i$ in $\{0,1,\ldots,e-1\}$. 
 Then we set:
 $$\| \ulambda \|^{(e,{\bf s})}:=\frac{ (\alpha^{e,{\bf s}} (\ulambda),\alpha^{e,{\bf s}} (\ulambda))}{2},\ \| \Lambda_{\bf s}\|=  \frac{ (\Lambda_{\bf s},\Lambda_{\bf s})}{2},$$ 
 so that 
  $$\begin{array}{rcl}
  \| \uemptyset \|^{(e,{\bf s})}  &=&\displaystyle \frac{1}{2}(-\Delta_{\bf s}\delta+\Lambda_{\bf s},-\Delta_{\bf s}\delta+\Lambda_{\bf s})\\
   &=& -\Delta_{\bf s}l +\| \Lambda_{\bf s}\|.
   \end{array}$$
  \begin{exa}
  For $l=1$ and ${ s}=0$ we have $\Delta_{ s}=0$ and $\| \Lambda_{ s}\|=0$ so that   $\| \uemptyset \|^{(e,0)}  =0$. 
  
  \end{exa}
  There is  an easy way to calculate $\| \ulambda \|^{(e,{\bf s})} $. The proof is in fact contained in \cite[Lemme 4.13]{Yvth} and is similar to \cite[Prop 8.1]{LLT}. We give it below for the convenience of the reader.  
  \begin{Prop}\label{b1} Let ${\bf s}\in \mathbb{Z}^l$ and 
  let $\ulambda\in \Pi^l$. Assume that $\umu\in \Pi^l$ is such that one can add an addable $i$-node to $\umu$ to obtain 
   $\ulambda$. Then we have 
   $$\| \umu \|^{(e,{\bf s})}  -\| \ulambda \|^{(e,{\bf s})}  =M_i ^{\bf s}(\umu)-1$$
  where $M_i^{\bf s} (\umu) $  is  the number of addable nodes of $\umu$ minus the number of removable nodes of $\umu$. 
  \end{Prop}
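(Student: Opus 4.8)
The plan is to compute directly the change in $\|\cdot\|^{(e,{\bf s})}$ when passing from $\umu$ to $\ulambda=\umu\cup\{\gamma\}$ where $\gamma$ is an addable $i$-node. From the definition, $\alpha^{e,{\bf s}}(\umu)-\alpha^{e,{\bf s}}(\ulambda)=\alpha_i$, since adding one $i$-node increases $c^{e,{\bf s}}_i$ by one and leaves the other $c^{e,{\bf s}}_j$ unchanged, while $\Delta_{\bf s}$ and $\Lambda_{\bf s}$ depend only on ${\bf s}$. Writing $\alpha:=\alpha^{e,{\bf s}}(\ulambda)$ for brevity, we then have
\[
\|\umu\|^{(e,{\bf s})}-\|\ulambda\|^{(e,{\bf s})}=\frac{(\alpha+\alpha_i,\alpha+\alpha_i)-(\alpha,\alpha)}{2}=(\alpha,\alpha_i)+\frac{(\alpha_i,\alpha_i)}{2}=(\alpha,\alpha_i)+1,
\]
using $(\alpha_i,\alpha_i)=a_{ii}=2$. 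So it remains to identify $(\alpha^{e,{\bf s}}(\ulambda),\alpha_i)$ with $M_i^{\bf s}(\umu)-2$.

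First I would expand $(\alpha^{e,{\bf s}}(\ulambda),\alpha_i)$ using the bilinear form relations recorded in the excerpt: $(\delta,\alpha_i)=0$, $(\Lambda_j,\alpha_i)=\delta_{ij}$, and $(\alpha_j,\alpha_i)=a_{ji}$. This gives
\[
(\alpha^{e,{\bf s}}(\ulambda),\alpha_i)=\#\{j\mid s_j\equiv i\}-\sum_{0\le j\le e-1}c^{e,{\bf s}}_j(\ulambda)\,a_{ji}=\#\{j\mid s_j\equiv i\}-\bigl(2c^{e,{\bf s}}_i(\ulambda)-c^{e,{\bf s}}_{i-1}(\ulambda)-c^{e,{\bf s}}_{i+1}(\ulambda)\bigr),
\]
again reading indices modulo $e$. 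Next I would relate this to the combinatorics of addable and removable nodes. The number of addable $i$-nodes of $\umu$ minus the number of removable $i$-nodes of $\umu$ is a classical quantity: on each component's abacus, as in the proof of Proposition~\ref{compa}, addable $i$-nodes correspond to a bead in a position $\equiv i-1$ with an empty spot $\equiv i$ to its right, removable $i$-nodes to the reverse configuration, so the difference telescopes. Concretely, for a single partition, $M_i^{s}(\lambda)=-\langle\alpha^{e,(s)}(\lambda),h_i\rangle$ (or equivalently equals the coefficient of $-\alpha_i$ pattern in the $\mathfrak{sl}_e$-weight read off the abacus), and one obtains the identity $M_i^{\bf s}(\umu)=\langle\text{(charge part)}\rangle-2c^{e,{\bf s}}_i(\umu)+c^{e,{\bf s}}_{i-1}(\umu)+c^{e,{\bf s}}_{i+1}(\umu)$ with a correction by the constant $l$ appearing only in a fixed degree; the cleanest route is to use $M_i^{\bf s}(\umu)=\langle \alpha^{e,{\bf s}}(\umu),\alpha_i^\vee\rangle$-type pairing, i.e.\ the defining fact that the $\mathcal{U}_v(\widehat{\mathfrak{sl}}_e)$-action makes $\ulambda$ a weight vector and $\te_i$, $\tf_i$ act between weight spaces differing by $\alpha_i$, so the net number of addable minus removable $i$-nodes is exactly $\langle \text{wt}(\umu),h_i\rangle$.

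Then I would combine: since $\ulambda$ has one more $i$-node than $\umu$, $c^{e,{\bf s}}_i(\ulambda)=c^{e,{\bf s}}_i(\umu)+1$ and the other $c_j$ agree, so $2c^{e,{\bf s}}_i(\ulambda)-c^{e,{\bf s}}_{i-1}(\ulambda)-c^{e,{\bf s}}_{i+1}(\ulambda)=2c^{e,{\bf s}}_i(\umu)-c^{e,{\bf s}}_{i-1}(\umu)-c^{e,{\bf s}}_{i+1}(\umu)+2$. Plugging this into the expansion of $(\alpha^{e,{\bf s}}(\ulambda),\alpha_i)$ and matching with the node-count formula for $M_i^{\bf s}(\umu)$ yields $(\alpha^{e,{\bf s}}(\ulambda),\alpha_i)=M_i^{\bf s}(\umu)-2$, hence $\|\umu\|^{(e,{\bf s})}-\|\ulambda\|^{(e,{\bf s})}=M_i^{\bf s}(\umu)-2+1=M_i^{\bf s}(\umu)-1$, as claimed. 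The main obstacle is the bookkeeping in identifying $M_i^{\bf s}(\umu)$ with the $h_i$-pairing of the weight of $\umu$: one must be careful that $M_i^{\bf s}$ counts nodes over \emph{all} components at once and that the residue conventions (the shift by $s_c$ in component $c$, reductions modulo $e$) are consistent between the $c^{e,{\bf s}}_i$ and the abacus description; once that dictionary is pinned down — which is exactly the kind of local abacus analysis already carried out in the proof of Proposition~\ref{compa} — the rest is the short bilinear-form computation above. I would either cite \cite[Lemme 4.13]{Yvth} for this weight-vs-node identity or reprove it in two lines from the explicit $\mathcal{U}_v(\widehat{\mathfrak{sl}}_e)$-action on $\mathcal{F}_{\bf s}$.
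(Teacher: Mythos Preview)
Your proposal is correct and follows essentially the same strategy as the paper: a short bilinear-form computation reducing the statement to the identification $(\alpha^{e,{\bf s}}(\umu),\alpha_i)=M_i^{\bf s}(\umu)$, which is exactly the content of Uglov's formula $t_i\,\umu=v^{M_i^{\bf s}(\umu)}\umu$ (and what \cite[Lemme~4.13]{Yvth} records). The only cosmetic difference is that the paper pairs with $\alpha^{e,{\bf s}}(\umu)$ rather than $\alpha^{e,{\bf s}}(\ulambda)$, obtaining $(\alpha^{e,{\bf s}}(\umu),\alpha_i)-1$ directly and so avoiding your intermediate ``$-2$'' shift; your detour through the explicit $c_j$-expansion is unnecessary once you invoke the $t_i$-action, as you yourself note.
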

  \begin{proof}
  Under the above notation, we have that: 
    $$\displaystyle\begin{array}{rcl}
  \| \umu \|^{(e,{\bf s})} - \|\ulambda \|^{(e,{\bf s})}  &=&\displaystyle (1/2)\left( ( \alpha^{e,{\bf s}} (\umu) ,\alpha^{e,{\bf s}} (\umu) -( \alpha^{e,{\bf s}} (\umu) -\alpha_i,\alpha^{e,{\bf s}} (\umu)     -\alpha_i  \right)) \\
   &=&   (1/2)\left( 2( \alpha^{e,{\bf s}} (\umu) , \alpha_i) -(\alpha_i,\alpha_i)  \right) \\
   &=&   \displaystyle  (\alpha^{e,{\bf s}} (\umu) , \alpha_i) -1 \\
   \end{array}$$
   Now, by the previous definition of the weight $\alpha^{e,{\bf s}}$, we have $\alpha^{e,{\bf s}} (\umu) =\sum_{0\leq i\leq e-1} a_i \Lambda_i+d \delta$ 
   if and only if $\delta. \umu=d\umu$ and $t_i \umu=v^{a_i} \umu$. As by \cite[Th. 2.1]{Ug}, it is known that $t_i \umu=v^{M^{\bf s}_i (\umu)} \umu$, we can conclude. 
  \end{proof}
  
 It is now easy to compute the block weight $p_{(e,{\bf s})}$.
\begin{Prop}\label{b2}
Let ${\bf s}\in \mathbb{Z}^l$ and 
  let $\ulambda\in \Pi^l$.  
We have 
$$\| \ulambda \|^{(e,{\bf s})}  =\displaystyle    \| \uemptyset \|^{(e,{\bf s})}  -p_{(e,{\bf s})} (\ulambda) .$$

\end{Prop}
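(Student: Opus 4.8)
The plan is to prove the identity by induction on the rank $n=|\ulambda|$, using Proposition \ref{b1} to handle the inductive step and the relation between $\|\cdot\|^{(e,{\bf s})}$ and $p_{(e,{\bf s})}$ under adding a node. First I would check the base case: when $n=0$, $\ulambda=\uemptyset$, and by definition $c^{e,{\bf s}}_i(\uemptyset)=0$ for all $i$, so $p_{(e,{\bf s})}(\uemptyset)=0$; hence both sides equal $\|\uemptyset\|^{(e,{\bf s})}$ and the identity holds trivially.

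For the inductive step, suppose $\ulambda$ has rank $n>0$ and pick $\umu$ of rank $n-1$ obtained from $\ulambda$ by removing a removable $i$-node, for some $i\in\mathbb{Z}/e\mathbb{Z}$; equivalently $\ulambda$ is obtained from $\umu$ by adding an addable $i$-node. By Proposition \ref{b1} we have
$$\|\umu\|^{(e,{\bf s})}-\|\ulambda\|^{(e,{\bf s})}=M_i^{\bf s}(\umu)-1.$$
By the induction hypothesis $\|\umu\|^{(e,{\bf s})}=\|\uemptyset\|^{(e,{\bf s})}-p_{(e,{\bf s})}(\umu)$, so it suffices to show
$$p_{(e,{\bf s})}(\ulambda)-p_{(e,{\bf s})}(\umu)=M_i^{\bf s}(\umu)-1.$$
The key computation is therefore purely combinatorial: express $M_i^{\bf s}(\umu)$ in terms of the residue multiset, namely $M_i^{\bf s}(\umu)=-c_{i}^{e,{\bf s}}(\umu)+c_{i-1}^{e,{\bf s}}(\umu)-c_{i+1}^{e,{\bf s}}(\umu)+\ldots$; more precisely one shows $M_i^{\bf s}(\umu)=\#\{j:s_j\equiv i\}+2c_{i-1}^{e,{\bf s}}(\umu)-c_i^{e,{\bf s}}(\umu)-c_{i-1-?}$. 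To avoid errors I would instead directly differentiate the defining formula for $p_{(e,{\bf s})}$: adding an $i$-node changes $c_i^{e,{\bf s}}$ by $1$ and leaves the other $c_j^{e,{\bf s}}$ unchanged, so
$$p_{(e,{\bf s})}(\ulambda)-p_{(e,{\bf s})}(\umu)=\#\{j:s_j\equiv i\ (\mathrm{mod}\ e)\}-\tfrac12\Big[(c_i-c_{i-1}+1)^2+(c_{i+1}-c_i-1)^2-(c_i-c_{i-1})^2-(c_{i+1}-c_i)^2\Big],$$
where $c_k=c_k^{e,{\bf s}}(\umu)$, which simplifies to $\#\{j:s_j\equiv i\}+c_{i-1}-2c_i+c_{i+1}-1$ (with the understanding that the $\delta_{i,0}$-type correction coming from the null root is absorbed correctly; one has to be slightly careful with the shift $\delta_{i,0}\delta$ in $\alpha_0$, but this affects only the $\delta$-coefficient, not the bilinear form value).

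The main obstacle I anticipate is matching the quantity $\#\{j:s_j\equiv i\}+c_{i-1}-2c_i+c_{i+1}$ with $M_i^{\bf s}(\umu)+1$; this requires an independent combinatorial formula for $M_i^{\bf s}(\umu)$ in terms of residues, which can be extracted from the abacus description given in the proof of Proposition \ref{compa} (the count of black beads $B_i^r$ there gives exactly $M_i^s(\lambda)=B_{i-1}^r-B_i^r+\delta_{i,0}$, and $B_i^r-B_{i-1}^r$ differs from $c_{i-1}-c_i$ by a constant depending only on ${\bf s}$). Assembling these, the telescoping completes the induction. I would present the argument by first stating the residue formula for $M_i^{\bf s}$ as the crux, then doing the one-line $p$-difference computation, then invoking Proposition \ref{b1} and the induction hypothesis; the bookkeeping of the fundamental-weight terms $\Lambda_{s_1}+\ldots+\Lambda_{s_l}$ (which contribute the $\#\{j:s_j\equiv i\}$ term via $(\Lambda_{s_j},\alpha_i)=\delta_{s_j',i}$) is the only place where care with the definitions is genuinely needed.
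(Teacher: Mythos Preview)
Your inductive approach is sound and does yield the result, but it is a genuinely different route from the paper's. The paper gives a direct, non-inductive computation: it simply expands
\[
(\alpha^{e,{\bf s}}(\ulambda),\alpha^{e,{\bf s}}(\ulambda))
= 2\|\uemptyset\|^{(e,{\bf s})} - 2\sum_{i} c_i^{e,{\bf s}}(\ulambda)(\Lambda_{\bf s},\alpha_i) + \sum_{i,j} c_i^{e,{\bf s}}(\ulambda)c_j^{e,{\bf s}}(\ulambda)(\alpha_i,\alpha_j),
\]
uses $(\Lambda_{\bf s},\alpha_i)=\#\{j:s_j\equiv i\}$ and $(\alpha_i,\alpha_j)=a_{ij}$, and recognises the quadratic part as $\sum_i (c_i-c_{i-1})^2$. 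That is the whole proof: five lines, no induction, and Proposition~\ref{b1} is not used. In the paper's logic, Proposition~\ref{b3} (your identity $p_{(e,{\bf s})}(\ulambda)-p_{(e,{\bf s})}(\umu)=M_i^{\bf s}(\umu)-1$) is a \emph{consequence} of Propositions~\ref{b1} and~\ref{b2}, not an ingredient in the proof of~\ref{b2}.

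Your proof reverses this dependency: you first establish~\ref{b3} combinatorially and then combine it with~\ref{b1} to get~\ref{b2} by induction on $|\ulambda|$. This is fine, but two remarks. First, a small slip: you need $\#\{j:s_j\equiv i\}+c_{i-1}-2c_i+c_{i+1}=M_i^{\bf s}(\umu)$, not $M_i^{\bf s}(\umu)+1$. Second, the cleanest source for this residue formula for $M_i^{\bf s}(\umu)$ is not the abacus argument of Proposition~\ref{compa} (relating the bead counts $B_i^r$ to the $c_i$'s is extra work) but rather the very computation inside the proof of Proposition~\ref{b1}: there one has $M_i^{\bf s}(\umu)=(\alpha^{e,{\bf s}}(\umu),\alpha_i)$, and expanding the right-hand side using $(\Lambda_{\bf s},\alpha_i)=\#\{j:s_j\equiv i\}$, $(\delta,\alpha_i)=0$ and $(\alpha_j,\alpha_i)=a_{ji}$ gives exactly $\#\{j:s_j\equiv i\}+c_{i-1}-2c_i+c_{i+1}$. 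Once you see this, your inductive step collapses to a tautology---and at that point you are essentially reproducing the paper's direct expansion, just one node at a time rather than all at once.
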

\begin{proof}
We can write:
$$\begin{array}{rcl}
(\alpha^{e,{\bf s}} (\ulambda),\alpha^{e,{\bf s}} (\ulambda)) &=& \displaystyle ( -\Delta_{\bf s} \delta +\Lambda_{\bf s}-\sum_{0\leq i\leq e-1} c^{e,{\bf s}}_i (\ulambda) \alpha_i,-\Delta_{\bf s} \delta +\Lambda_{\bf s}-\sum_{0\leq i\leq e-1} c^{e,{\bf s}}_i (\ulambda) \alpha_i)\\

&=&\displaystyle 2   \| \uemptyset \|^{(e,{\bf s})}  -2\sum_{0\leq i\leq e-1}  c^{e,{\bf s}}_i (\ulambda) (\Lambda_{\bf s}, \alpha_i) +\sum_{0\leq i,j\leq e-1} c^{e,{\bf s}}_i (\ulambda) c^{e,{\bf s}}_j (\ulambda)
 (\alpha_i,\alpha_j)\\
 &=& \displaystyle   2\| \uemptyset \|^{(e,{\bf s})}  -2\sum_{1\leq i\leq l}  c^{e,{\bf s}}_{s_i} (\ulambda)  +\sum_{0\leq i,j\leq e-1}(- c^{e,{\bf s}}_i (\ulambda) c^{e,{\bf s}}_{i-1} (\ulambda)
 +2 c^{e,{\bf s}}_i (\ulambda)^2 - c^{e,{\bf s}}_i (\ulambda) c^{e,{\bf s}}_{i+1} (\ulambda))\\
  &=&\displaystyle   2 \| \uemptyset \|^{(e,{\bf s})} -2\sum_{1\leq i\leq l}  c^{e,{\bf s}}_{s_i} (\ulambda)  +\sum_{0\leq i,j\leq e-1}(c^{e,{\bf s}}_i (\ulambda) - c^{e,{\bf s}}_{i-1} (\ulambda))^2\\
    &=&\displaystyle   2 \| \uemptyset \|^{(e,{\bf s})}  -2p_{(e,{\bf s})} (\ulambda) .  \\

\end{array}$$

\end{proof}
Combining these two propositions leads to:
  \begin{Prop}\label{b3}
 Let ${\bf s}\in \mathbb{Z}^l$  and  $\ulambda\in \Pi^l$. Assume that $\umu\in \Pi^l$ is such that one can add an addable $i$-node to $\umu$ to obtain 
   $\ulambda$. Then we have 
   $$p_{(e,{\bf s})} (\ulambda)  -p_{(e,{\bf s})} (\umu) =M_i ^{\bf s}(\umu)-1.$$
  \end{Prop}

The above proposition will be a crucial ingredient in the proof of one of our main results in the next section. 

\subsection{Computation of weights}

We here want to prove the following theorem. It mainly asserts that the block weight for an $l$-partition associated with a multicharge 
  can always been computed in terms of the usual block weight for a partition.  This result uses the map
   $\tau_{e,{\bf s}}$ defined in the previous section only for the multicharge  in $\overline{\mathcal{A}}^l_e$ (see Proposition \ref{emain}).

\begin{Th}\label{main}
 Let ${\bf s}\in\overline{\mathcal{A}}^l_e$  and  $\ulambda\in \Pi^l$ . We have:
 $$p_{(e,{\bf s})} (\ulambda)= p_{(e,s)} (\tau_{e,{\bf s}} (\ulambda))$$ where $s=\sum_{1\leq i\leq l} s_i$
\end{Th}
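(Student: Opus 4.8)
The plan is to prove the identity by induction on the rank $n=|\ulambda|$, using the recursive formula for block weights under adding an addable node (Proposition \ref{b3}) together with the node-counting comparison between $\ulambda$ and $\tau_{e,{\bf s}}(\ulambda)$ (Proposition \ref{compa}). Let me describe the base case and the inductive step.

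\textbf{Base case.} When $n=0$, we have $\ulambda=\uemptyset$, and $\tau_{e,{\bf s}}(\uemptyset)$ is an $e$-core by Proposition \ref{taucore}. Since block weights are nonnegative and vanish exactly on cores (an $e$-core partition $\lambda$ satisfies $p_{(e,s)}(\lambda)=0$ — this is the $l=1$ case, where $p_{(e,s)}$ is the classical $e$-weight, here $0$; note $p_{(e,s)}$ does not actually depend on $s$ modulo the reindexing, so one uses $\| \uemptyset\|^{(e,s)} = \|\tau_{e,{\bf s}}(\uemptyset)\|^{(e,s)}$ which follows from Proposition \ref{b2} applied to the $e$-core $\tau_{e,{\bf s}}(\uemptyset)$), we get $p_{(e,{\bf s})}(\uemptyset)=0=p_{(e,s)}(\tau_{e,{\bf s}}(\uemptyset))$. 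Strictly speaking, the cleanest route is: $p_{(e,{\bf s})}(\uemptyset)=0$ is immediate from the definition of $p_{(e,{\bf s})}$ when ${\bf s}\in\overline{\mathcal A}^l_e$ (the $c^{e,{\bf s}}_i(\uemptyset)$ all vanish), and $p_{(e,s)}(\tau_{e,{\bf s}}(\uemptyset))=0$ because $\tau_{e,{\bf s}}(\uemptyset)$ is an $e$-core so its classical $e$-weight is $0$.

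\textbf{Inductive step.} Suppose the identity holds for all $l$-partitions of rank $n-1$, and let $\ulambda$ have rank $n>0$. Pick any removable node of $\ulambda$, say of residue $i\in\mathbb Z/e\mathbb Z$, and let $\umu$ be the $l$-partition of rank $n-1$ obtained by removing it, so $\ulambda$ is obtained from $\umu$ by adding an addable $i$-node. By Proposition \ref{b3},
\[
p_{(e,{\bf s})}(\ulambda)-p_{(e,{\bf s})}(\umu)=M_i^{\bf s}(\umu)-1.
\]
Now I need the analogous statement downstairs. Here there are two cases. If $i\neq 0$: removing the chosen node from $\ulambda$ corresponds (as observed in the proof of Proposition \ref{compare}) to removing a single $i$-node from $\tau_{e,{\bf s}}(\ulambda)$, so $\tau_{e,{\bf s}}(\ulambda)$ is obtained from $\tau_{e,{\bf s}}(\umu)$ by adding an addable $i$-node; Proposition \ref{b3} (case $l=1$) gives $p_{(e,s)}(\tau_{e,{\bf s}}(\ulambda))-p_{(e,s)}(\tau_{e,{\bf s}}(\umu))=M_i^{s}(\tau_{e,{\bf s}}(\umu))-1$, and by Proposition \ref{compa} we have $M_i^{\bf s}(\umu)=M_i^{s}(\tau_{e,{\bf s}}(\umu))$, so the two increments agree and the induction closes. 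If $i=0$: as in the proof of Proposition \ref{compare}, $\tau_{e,{\bf s}}(\ulambda)$ is obtained from $\tau_{e,{\bf s}}(\umu)$ by a chain of $le+1$ single-node additions running through residues $0,1,\ldots,e-1,0,1,\ldots$ (removing a part ending in residue $e-1$, incrementing $m$ intermediate parts, and adding a part of length $x+le-m+1$ ending in residue $0$). Applying Proposition \ref{b3} along this chain and telescoping, the total increment $p_{(e,s)}(\tau_{e,{\bf s}}(\ulambda))-p_{(e,s)}(\tau_{e,{\bf s}}(\umu))$ equals $\sum_j (M_{i_j}^{s}(\cdot)-1)$ over the chain. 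The point is that the intermediate $M$-values along a maximal chain of additions realizing $le+1$ consecutive nodes on one runner telescope to leave exactly the "net" contribution, which by the $i=0$ case of Proposition \ref{compa} should come out to $M_0^{\bf s}(\umu)-1$ as well — the extra $l-1$ discrepancy in Proposition \ref{compa} between $M_0^{\bf s}(\umu)$ and $M_0^{s}(\tau_{e,{\bf s}}(\umu))$ is precisely absorbed by the $le+1$ single steps versus the one step upstairs (numerically: the chain of $le+1$ steps contributes $-({le+1})$ from the "$-1$"s but the classical affine weight identity $\|\cdot\|$ only shifts by the net root difference).

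\textbf{Main obstacle.} The delicate point is the $i=0$ case: one must check carefully that adding $le+1$ nodes downstairs, with the specific ``remove one part of length $x$, bump $m$ parts, add one part of length $x+le-m+1$'' pattern, produces a net change in $p_{(e,s)}$ equal to $M_0^{\bf s}(\umu)-1$. Rather than tracking each $M_{i_j}^s$ along the chain, the cleaner argument is to bypass Proposition \ref{b3} entirely and instead use Proposition \ref{b2}: write $p_{(e,{\bf s})}(\ulambda)=\|\uemptyset\|^{(e,{\bf s})}-\|\ulambda\|^{(e,{\bf s})}$ and $p_{(e,s)}(\tau_{e,{\bf s}}(\ulambda))=\|\uemptyset\|^{(e,s)}-\|\tau_{e,{\bf s}}(\ulambda)\|^{(e,s)}$, and then show directly from the definition of the affine weight $\alpha^{e,{\bf s}}$ that $\|\ulambda\|^{(e,{\bf s})}-\|\uemptyset\|^{(e,{\bf s})}=\|\tau_{e,{\bf s}}(\ulambda)\|^{(e,s)}-\|\tau_{e,{\bf s}}(\uemptyset)\|^{(e,s)}$, i.e. that the squared length of the ``root part'' $\sum_i c^{e,{\bf s}}_i(\ulambda)\alpha_i$ (suitably shifted) is preserved. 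By Proposition \ref{compare}, the residue-counting vector of $\tau_{e,{\bf s}}(\ulambda)$ differs from that of $\ulambda$ by the uniform shift $i\mapsto c^{e,{\bf s}}_i(\ulambda)+l\cdot c^{e,{\bf s}}_0(\ulambda)$ plus a constant; a uniform shift of all $c_i$ by the same constant adds a multiple of $\delta=\alpha_0+\cdots+\alpha_{e-1}$ to the root part, which has length $0$ and pairs trivially with everything relevant, so it does not change $\|\cdot\|$. This reduces the whole theorem to the identity of Proposition \ref{compare} plus the isometry property of the $\tau$-shift, which is the route I would actually write up.
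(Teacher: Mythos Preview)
Your inductive framework---Proposition~\ref{b3} upstairs together with Proposition~\ref{compa} to match the increments---is exactly the paper's strategy, and your base case and the $i\neq 0$ case are correct and coincide with the paper's argument.

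The gap is in your handling of $i=0$. In your preferred route via Proposition~\ref{b2} and Proposition~\ref{compare}, the assertion that a uniform shift of all $c_i$ ``adds a multiple of $\delta$\dots which\dots pairs trivially with everything relevant'' is false: one has $(\delta,\Lambda_j)=1$, so at level~$1$ adding $k\delta$ to the root part of the weight shifts $\|\cdot\|^{(e,s)}$ by $-k$. Concretely, if $\gamma=\sum_i c^{e,{\bf s}}_i(\ulambda)\alpha_i$ and Proposition~\ref{compare} gives $\gamma'-\gamma_0=\gamma+(\text{const})\,\delta$, then the quadratic term $(\gamma'-\gamma_0,\gamma'-\gamma_0)=(\gamma,\gamma)$ is indeed unchanged, but the linear term $(\alpha^{e,s}(\tau_{e,{\bf s}}(\uemptyset)),\,\gamma'-\gamma_0)$ picks up an extra contribution from $(\alpha^{e,s}(\tau_{e,{\bf s}}(\uemptyset)),\delta)=1$. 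This extra term \emph{does} cancel---essentially against the $i=0$ discrepancy in Proposition~\ref{compa} applied at $\uemptyset$---but you have not identified this cancellation, and it is precisely the content of the $i=0$ case. (Your first attempt, summing $M_{i_j}^{s}(\cdot)-1$ along a chain of single-node additions, is left at the level of ``should come out to''; those terms do not literally telescope, and you never compute their sum.)

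The paper handles $i=0$ by a different and shorter device that stays entirely in the abacus picture. Writing $\lambda=\tau_{e,{\bf s}}(\ulambda)$ and $\mu=\tau_{e,{\bf s}}(\umu)$, the two level-$1$ abaci differ by sliding one bead $(l-1)e+1$ positions. The paper interposes an intermediate partition $\nu$: first slide the bead $(l-1)e$ positions along its $e$-runner (removing or adding $l-1$ rim $e$-hooks, hence changing $p_{(e,s)}$ by exactly $l-1$), then slide it one more position (a single $0$-node, to which Proposition~\ref{b3} at level~$1$ applies directly). Combining these two contributions with the $i=0$ case of Proposition~\ref{compa} gives $p_{(e,s)}(\lambda)-p_{(e,s)}(\mu)=M_0^{\bf s}(\umu)-1$, closing the induction. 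This avoids both the long chain and any bilinear-form bookkeeping, at the price of a small case split on whether the intermediate bead position is already occupied.
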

\begin{proof}
We argue by induction on the rank $n$ of $\ulambda$. Assume that $n=0$. Then $p_{(e,{\bf s})} (\ulambda)=0$ and by Proposition \ref{taucore}, $\tau_{e,{\bf s}} (\ulambda)$ is 
 an $e$-core so its weight is equal to $0$. Assume now that $n>0$. Let $\umu$ be an $l$-partition obtained from $\ulambda$ by deleting 
  a removable $i$-node for some $i\in \mathbb{Z}/e\mathbb{Z}$. By Proposition \ref{b3}, we get
$$p_{(e,{\bf s})} (\ulambda)  -p_{(e,{\bf s})} (\umu) =M_i ^{\bf s}(\umu)-1.$$
Now we have two cases to consider.
\begin{itemize}
\item Assume that $i\neq 0$ then $\tau_{e,{\bf s}} (\umu)$ is also obtained from 
$\tau_{e,{\bf s}} (\ulambda)$ by deleting a removable $i$-node. We have 
$$p_{(e,{\bf s})} (\tau_{e,{\bf s}} (\ulambda)) -p_{(e,{\bf s})}(\tau_{e,{\bf s}} (\umu)) =M_i ^{ s}(\tau_{e,{\bf s}} (\umu))-1.$$
We can thus conclude by induction using Proposition \ref{compa}
\item Assume that $i=0$ then to simplify the notation write $\mu:=\tau_{e,{\bf s}} (\umu)$ and 
$\lambda:=\tau_{e,{\bf s}} (\ulambda)$. We assume that the removable node corresponds to 
 a black bead of the $1$-abacus of $\tau_{e,{\bf s}} (\ulambda)$ in  position $x$.  
  By hypothesis, there is no black bead in position $x-(l-1)e-1$, because 
  $\tau_{e,{\bf s}} (\ulambda)$ and $\tau_{e,{\bf s}} (\umu)$ have the same abacus except that 
  the black beads between position $x$ and $x-(l-1)e-1$ are exchanged (and so are the empty position in the remaining one).  
Again, we will consider two cases:
\begin{itemize}
\item Assume that there is no black bead in position $x-(l-1)e$. Then one can consider $\nu$ the partition defined by the abacus obtained by moving the bead 
 in position $x$ from the abacus of $\lambda$  to the position $x-(l-1)e$. Its weight $p_{(e,{\bf s})} (\nu)$    is  equal to $p_{(e,{\bf s})} (\lambda) -(l-1)$ because  $\nu$ is obtained from $\lambda$ by  removing $l-1$ hooks from $\lambda$. 
  Now we have by Proposition \ref{b3}:
$$p_{(e,{\bf s})} (\nu)  -p_{(e,{\bf s})} (\mu) =M_0 ^{ s}(\mu)-1.$$
We conclude that 
$$p_{(e,{\bf s})} (\lambda)  -p_{(e,{\bf s})} (\mu) =M_0 ^{ s}(\mu)+l$$
that is, by Proposition \ref{compa}
$$p_{(e,{\bf s})} (\lambda)  -p_{(e,{\bf s})} (\mu) =M_0 ^{\bf s}(\umu)-1.$$
\item Assume that there is no black bead in position $x-(l-1)e$. Then we proceed in the opposite way: we define $\nu$ to be the partition obtained from $\lambda$  by moving
 the bead in position $x-(l-1)e$ to the position $x-(l-1)e$. Then $\mu$ is obtained from $\nu$ by moving the bead in position $x$ to the position $x-(l-1)e$ (which consists in removing $(l-1)$ $e$-hooks).
  We conclude exactly as in the previous case. 

\end{itemize}

\end{itemize}

\end{proof}
What can we do in the case where ${\bf s} \notin \overline{\mathcal{A}}^l_e$ ? In fact, one can use the procedure in \S \ref{esc} and associate to $\ulambda$ and ${\bf s}$ a multicharge $\widetilde{\bf s}^{\sigma_{\bf s}}\in \mathcal{A}^l_e$ and a multipartition $\ulambda^{\sigma_{\bf s}}$. It is clear from the definition that:
  $$p_{(e,{\bf s})} (\ulambda)= p_{(e,\widetilde{\bf s}^{\sigma_{\bf s}})} (\ulambda^{\sigma_{\bf s}})$$
which thus gives an effective way to compute the block weight in all cases.

\begin{Rem}
A (maybe more direct) proof might also be obtained using Proposition \ref{compare} but the above one has the advantage to avoid cumbersome computations.

\end{Rem}

\section{Further remarks  and applications}

In this section, we show how our main results simplify the block theory for Ariki-Koike algebras. In particular we show the relations of our work with some results by Fayers. By the definitions of cores and block weights, one can assume that ${\bf s} \in \mathcal{A}_e^l$ in this section. However, we will try to explain how all our results can be adapted to the general case 
${\bf s} \in \mathbb{Z}^l$.

\subsection{Cores of multipartitions}\label{expl}

Let us start with an easy corollary of Theorem \ref{main}.

\begin{Cor}
	Assume ${\bf s} \in \overline{\mathcal{A}}^l_e$. Then, the reduced $(e,{\bf s})$-core are exactly the elements of block weight $0$. 
\end{Cor}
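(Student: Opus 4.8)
The plan is to deduce this corollary from Theorem \ref{main} together with the already-established description of the Uglov map on cores. First I would reduce to the case $\mathbf{s}\in\overline{\mathcal{A}}^l_e$, in which (as remarked in \S\ref{esc}) the reduced $(e,\mathbf{s})$-cores coincide with the $(e,\mathbf{s})$-cores, so that Proposition \ref{emain} applies and tells us that $\tau_{e,\mathbf{s}}$ restricts to a bijection from $(e,\mathbf{s})$-cores (for multicharges in $\overline{\mathcal{A}}^l_e$) onto $e$-core partitions. The key point I would invoke from the $l=1$ theory recalled in \S\ref{ab} is that a partition $\mu$ is an $e$-core if and only if $\omega_e(\mu)=0$, and the $e$-weight $\omega_e(\mu)$ equals the block weight $p_{(e,s)}(\mu)$ for partitions (this is the classical fact that block weight and $e$-weight agree in the $l=1$ case, which follows from the abacus description since $p_{(e,s)}$ is unchanged along the process of removing $e$-hooks and vanishes on $e$-cores — indeed removing an $e$-hook is moving one bead one step on one runner, which by Proposition \ref{b3} changes $p_{(e,s)}$ by $M_i^s-1$ summed appropriately to give exactly $-1$ per hook).

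The main argument then runs as follows. Let $\ulambda\in\Pi^l$ with $\mathbf{s}\in\overline{\mathcal{A}}^l_e$. By Theorem \ref{main}, $p_{(e,\mathbf{s})}(\ulambda)=p_{(e,s)}(\tau_{e,\mathbf{s}}(\ulambda))$ where $s=\sum_i s_i$. Hence $p_{(e,\mathbf{s})}(\ulambda)=0$ if and only if $p_{(e,s)}(\tau_{e,\mathbf{s}}(\ulambda))=0$, i.e.\ if and only if $\tau_{e,\mathbf{s}}(\ulambda)$ is an $e$-core. So it remains to show: $\tau_{e,\mathbf{s}}(\ulambda)$ is an $e$-core if and only if $\ulambda$ is a reduced $(e,\mathbf{s})$-core. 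The ``if'' direction is exactly the well-definedness part of Proposition \ref{emain} (a reduced $(e,\mathbf{s})$-core maps under $\tau_{e,\mathbf{s}}$ to an $e$-core). For the converse, suppose $\tau_{e,\mathbf{s}}(\ulambda)$ is an $e$-core; since $\tau^l_e$ in Proposition \ref{emain} is a bijection onto pairs $(\lambda,s)$ with $\lambda$ an $e$-core, and $\tau_{e,\mathbf{s}}$ is injective on all of $\Pi^l$, the preimage of $(\tau_{e,\mathbf{s}}(\ulambda),s)$ under $\tau^l_e$ is a genuine $(e,\mathbf{s})$-core with the same $\tau_{e,\mathbf{s}}$-image and same multicharge; by injectivity of $\tau_{e,\mathbf{s}}$ this preimage is $\ulambda$ itself, so $\ulambda$ is a(n $(e,\mathbf{s})$-)core, hence reduced.

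I expect the only genuine obstacle to be pinning down the identity $p_{(e,s)}(\mu)=\omega_e(\mu)$ for ordinary partitions, or more precisely establishing that $p_{(e,s)}$ vanishes exactly on $e$-cores in the $l=1$ case — everything else is a formal consequence of Theorem \ref{main} and Proposition \ref{emain}. This can be handled cleanly by the abacus: on a single runner, removing a rim $e$-hook corresponds to sliding one bead left by one; iterating over all runners shows $\tau$ is weight-preserving on the $l=1$ nose already in the classical setting, and an $e$-core is characterized by having all beads pushed maximally left on every runner, which by Proposition \ref{b3} (applied runner-by-runner, or by the classical LLT-type computation) is precisely the condition $p_{(e,s)}=0$. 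Alternatively one can simply cite the classical fact. With that in hand the corollary is immediate; I would write it in three or four lines: apply Theorem \ref{main}, reduce to the $e$-core characterization of weight-$0$ partitions, and invoke the bijection of Proposition \ref{emain} to transfer back to $l$-partitions.
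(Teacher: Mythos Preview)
Your proposal is correct and follows essentially the same approach as the paper: apply Theorem \ref{main} to transfer the block-weight-$0$ condition to $\tau_{e,\mathbf{s}}(\ulambda)$, invoke the classical $l=1$ fact that $e$-cores are exactly the partitions of block weight $0$, and then use Proposition \ref{emain} to conclude. You are simply more explicit than the paper in spelling out both directions of the equivalence and in justifying the classical fact, which the paper takes as known.
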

\begin{proof}
	Let $\ulambda \in \Pi^l$, by Theorem \ref{main}, we have:
	$$p_{(e,{\bf s})} (\ulambda)= p_{(e,\sum_{1\leq i\leq l} s_i)} (\tau_{e,{\bf s}} (\ulambda))$$
	so $\ulambda$ is of block weight $0$ if and only if $\tau_{e,{\bf s}} (\ulambda)$ is of block weight $0$. Now, we know that the $e$-cores are exactly the partitions with block weight $0$ and we can thus conclude thanks to 
	Proposition \ref{emain}. 
\end{proof}

If we take ${\bf s}\in \mathbb{Z}^l$, then we have already noticed that:
$$p_{(e,{\bf s})} (\ulambda)= p_{(e,\widetilde{\bf s}^{\sigma_{\bf s}})} (\ulambda^{\sigma_{\bf s}}).$$
Since in addition  $\ulambda^{\sigma_{\bf s}}$ is a reduced  $\widetilde{\bf s}^{\sigma_{\bf s}}$-core if and only if 
$\ulambda$ is a $(e,{\bf s})$-core, we conclude that in the general case, the $(e,{\bf s})$-cores are exactly the elements of core weight $0$.

In \cite{Fa3}, Fayers has also introduced a notion of core for an $l$-partition associated with a multicharge.  His definition is the following one. Let  ${\bf s} \in \mathcal{A}_e^l$. Then an $l$-partition $\ulambda$ is a $(e| {\bf s})$-core if  there is no other $l$-partition $\umu$ such that 
$\mathcal{C}_{e,{\bf s}} (\ulambda)= \mathcal{C}_{e,{\bf s}} (\umu)$. 
In fact this coincides with our notion of $(e,{\bf s})$-cores. Indeed, by the results in \cite{Fa}, the  $(e| {\bf s})$-core  multipartitions are exactly the elements of weight $0$ (see \cite[Rem 2.3.1]{Fa3}) which are exactly the $(e,{\bf s})$-cores  by the above corollary.
In other words,  Definition \ref{escore1}  thus reveals the combinatorial structure of the $(e| {\bf s})$-cores introduced by Fayers.  Let  
us explain the consequences concerning the block theory of Ariki-Koike algebras and especially,  the similarities and the differences with the case $l=1$ that is, the case of the symmetric group. 

Let $\mathbb{F}\mathcal{H}_n^{{\bf s}}(\eta)$  be the Ariki-Koike algebra as defined in the introduction.  The representation theory of $\mathbb{F}\mathcal{H}_n^{{\bf s}}(\eta)$ 
is controlled by its decomposition matrix which we now briefly define. For all $l$-partition $\ulambda$; one can associate a certain finite dimensional 
$\mathbb{F}\mathcal{H}_n^{{\bf s}}(\eta)$-module $S^{\ulambda}$ called a Specht module. For each $M\in \operatorname{Irr} (\mathbb{F}\mathcal{H}_n^{{\bf s}}(\eta))$, we have the composition factor 
$[S^{\ulambda}:M]$. The matrix:
$$\mathcal{D}:=([S^{\ulambda}:M])_{\ulambda \in \Pi^{l}(n),M\in \operatorname{Irr} (\mathbb{F}\mathcal{H}_n^{{\bf s})}(\eta)}$$
is  the {\it decomposition matrix}.  By definition, two $l$-partitions $\ulambda$ and $\umu$ lie in the {\it same block} if 
there exists a sequence $(M_1,\ldots,M_r)$  of simple $\mathbb{F}\mathcal{H}_n^{{\bf s}}(\eta)$-modules and 
a sequence  of $l$-partitions $(\ulambda_1,\ldots,\ulambda_{r+1})$ with $\ulambda_1=\ulambda$, $\ulambda_{r+1}=\umu$ and 
for all $i\in \{1,\ldots,r\}$, we have $[S^{\ulambda_i}:M_i]\neq 0$ and 
$[S^{\ulambda_{i+1}}:M_i]\neq 0$. When $l=1$, we know that two partitions  are in the same block if and only if they have the same $e$-core and that their common weight 
is the number of $e$-hooks that can be removed to obtain this $e$-core. 
For $l>1$, a criterion has been provided by Lyle and Mathas \cite{LM} but it does not consist in any notion of hook or cores.  It asserts that $\ulambda$ and $\umu$ are in the same block of   $\mathbb{F}\mathcal{H}_n^{{\bf s}}(\eta)$ if  we have 
$\mathcal{C}_{e,{\bf s}} (\ulambda)= \mathcal{C}_{e,{\bf s}} (\umu)$.

Let $\ulambda$ be an $l$-partition of rank $n$.   To describe the blocks, one can restrict ourselves to the case   ${\bf s} \in \mathcal{A}_e^l$ (as usual the general case is derived by using the transformations in \S \ref{esc}).
We consider the  $(e,{\bf s})$-abacus $(L_{s_1} ,\ldots,L_{s_l} )$ of $\ulambda$. 
An {\it  elementary operation} on this abacus is defined as a move of one black bead from one runner of the abacus to another satisfying the following rule.
\begin{enumerate}
	\item If this black bead is not in the top runner, then we can do such an elementary operation on this black bead only if there is no black bead immediately above (that is in the same position on the runner just above). In this case, we slide 
	the black bead from its initial position, in a runner $i$,   to the runner $i+1$ located above in the same position.
	The resulting $l$-abacus corresponds to an $l$-partition  of rank $n-s_{i+1}+s_i-1$. 
	Indeed, when we add a black bead in the runner $i+1$ the rank becomes $n+ N-s_{i+1}-1$ for a certain integer $N$ and when we remove a   black bead from the runner $i$ in the same position, the rank becomes $n+ N-s_{i+1}-1 -(  N- s_i) $ that is 
	$n-s_{i+1}+s_i-1$.
	\item If this black bead is in the top runner in position $x$,  then we can do such an elementary operation only if there is no black bead in  position $x-e$ on the lowest runner. In this case, we slide the bead to the position $x-e$ of the lowest runner. 
	As above, the rank of the resulting $l$-partition is $n-(s_{1}-s_l+e+1)$
\end{enumerate}
Note that, after this procedure, the resulting multicharge associated with the $l$-abacus may not be in $\mathcal{A}_e^l$  but this  is not a problem: we can still perform  it  in the resulting abacus.  At the end, by construction, we obtain an $l$-abacus
$$(L_{v_1},\ldots,L_{v_l})$$ 
satisfying:
$$L_{v_1}\subset L_{v_2}\subset \ldots \subset L_l
\subset L_{v_{l} +e}.$$
This abacus is complete. Thus, by Proposition \ref{RedinAbar}, this corresponds to an $l$-partition $\umu$ and a multicharge ${\bf v}\in \overline{\mathcal{A}}^l_e$  such that $\umu$ is a reduced $(e,{\bf v})$-core.
 
\begin{Def} \label{Def_lcore}The  {\it core} of the $l$-partition $\ulambda$  associated with a multicharge ${\bf s}$ is the pair $(\umu,{\bf v})$ attached to $\ulambda$ and ${\bf s}$  by the previous procedure.
\end{Def}  

Doing an elementary operation on the  $(e,{\bf s})$-abacus  of $\ulambda$ as above  is equivalent to 
 remove one $e$-hook on the Young diagram of $\tau_{e,{\bf s}} (\ulambda)$. 
 As a consequence, 
by  Theorem \ref{main} and by the definition of the Uglov map,  $p_{(e,{\bf s})} (\ulambda)$ is the number of elementary operations we have made in this process to 
obtain our final abacus. The rank of the multipartition can also been computed thanks to the above remarks. The fact that this does not depend on the order in which the elementary operations are performed follows from the case $l=1$.

\begin{Rem}\label{Fam}
	\
	\begin{enumerate}	
	
	\item Take a black bead in the runner $i$ of the $l$-abacus  of $\ulambda$ in position $x$ such that there is no black bead in position $x-e$ in the same runner. Then one can always perform a series of $l$ elementary operations (as defined in the previous procedure) to obtain the same abacus except that the black bead in position $x$ moves to the position $x-e$ of the same runner.
	Indeed, let us denote by $b_1$ the bead in position $x$ in runner $i$, and consider all 
	 the beads $b_2$, \ldots, $b_k$ in position $x$ and runner $i_2$, \ldots, $i_k$ with $i<i_2\ldots<i_k$. 
	 Consider also the  beads $b_{k+1}$, \ldots, $b_r$ in position $x-e$ and runner $i_{k+1}$, \ldots, $i_r$ with $i_{k+1}\ldots<i_r<i$.
	 Then we can slide the bead $b_r$ in position $x-e$ in runner $i$, and then slide the bead $b_{r-1}$ to the position previously occupied by $b_r$ and so on. At the end, we obtain the desired abacus and we have made $l$ elementary operations to do that.  
	 In this case, the rank of the resulting $l$-partition is equal to $n-(s_{i+1}-s_{i}+1)-(s_{i+2}-(s_{i+1}+1)+1)-
	\cdots-(s_{1}-(s_{l}+1)+e+1)-\cdots-(s_{i}-1-(s_{i-1}+1)+1)$, that is $n-e$. 
	The $l$-partition so obtained is just the $l$-partition $\ulambda$ where a rim $e$-hook has been removed in  $\lambda^i$. This is thus consistent with our result. Nevertheless, this shorter hook removal procedure does not suffice to produce the core of $\ulambda$ for it can only yield a sequence of $l$ cores, that is a multicore.
	\item In \cite{Fa}, the notion of multicore is used instead of our notion of core.   From an arbitrary $l$-partition $\ulambda$, one can indeed associate another $l$-partition, with a smaller  block weight,  which may be seen as an "intermediate" between the   given $l$-partition and its $e$-core in the sense of Definition \ref{Def_lcore}. To do this, we can simply take the $e$-core of each partition or apply a sequence of elementary operations as we have just explained. We have already seen that the ${(e,\bf s})$-cores are multicores but the converse is not true in general. 
	\end{enumerate}
\end{Rem}

\begin{Cor}
	Two $l$-partitions with the same rank have the same  core  if and only if they belong to the same block of  $\mathbb{F}\mathcal{H}_n^{{\bf s}}(\eta)$. 
\end{Cor}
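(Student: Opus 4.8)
The plan is to transport both conditions through the Uglov map $\tau_{e,{\bf s}}$ and to reduce everything to the classical ($l=1$) block theory. As in \S\ref{expl}, using the transformations of \S\ref{esc} and Remark \ref{suffw} I may assume ${\bf s}\in\mathcal{A}_e^l$; I set $s=\sum_{i=1}^l s_i$ and $\lambda=\tau_{e,{\bf s}}(\ulambda)$, $\mu=\tau_{e,{\bf s}}(\umu)$, both partitions with associated charge $s$. By the Lyle--Mathas criterion, $\ulambda$ and $\umu$ lie in the same block of $\cH_n^{{\bf s}}(\eta)$ if and only if $\mathcal{C}_{e,{\bf s}}(\ulambda)=\mathcal{C}_{e,{\bf s}}(\umu)$, and by Corollary \ref{block} this is equivalent to $\mathcal{C}_{e,s}(\lambda)=\mathcal{C}_{e,s}(\mu)$. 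On the other side, the discussion preceding Definition \ref{Def_lcore} shows that passing from $\ulambda$ to its core amounts to removing rim $e$-hooks from $\lambda$; moreover an elementary operation removes one bead from one runner and adds one bead to another runner, hence leaves the sum of the entries of the multicharge unchanged. Therefore, if $(\unu,{\bf v})$ denotes the core of $\ulambda$, then $\unu$ is a reduced $(e,{\bf v})$-core with $\sum_i v_i=s$ and $\tau_{e,{\bf v}}(\unu)=\Core_e(\lambda)$; since $\tau_e^l$ is injective (Proposition \ref{emain}), the pair $(\unu,{\bf v})$ is recovered from $(\Core_e(\lambda),s)$. Consequently $\ulambda$ and $\umu$ have the same core if and only if $\Core_e(\lambda)=\Core_e(\mu)$.

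It then remains to compare, for partitions of charge $s$, the condition ``$\Core_e(\lambda)=\Core_e(\mu)$ together with $|\ulambda|=|\umu|$'' with the condition ``$\mathcal{C}_{e,s}(\lambda)=\mathcal{C}_{e,s}(\mu)$''. The implication ``$\Leftarrow$'' is classical: equal residue multisets force equal ranks and, by the $l=1$ theory, the same $e$-core (and, via Theorem \ref{main}, the equality $p_{(e,{\bf s})}(\ulambda)=p_{(e,{\bf s})}(\umu)$ gives the same $e$-weight as well). For ``$\Rightarrow$'' I would exploit the hypothesis $|\ulambda|=|\umu|=n$ quantitatively. Summing the identity of Proposition \ref{compare} over $i=0,\dots,e-1$ gives $|\lambda|=|\tau_{e,{\bf s}}(\uemptyset)|+n+e\,l\,c_0^{e,{\bf s}}(\ulambda)$, hence $|\lambda|-|\mu|=e\,l\,\bigl(c_0^{e,{\bf s}}(\ulambda)-c_0^{e,{\bf s}}(\umu)\bigr)$; since $\Core_e(\lambda)=\Core_e(\mu)$, and $|\lambda|=|\Core_e(\lambda)|+e\,\omega_e(\lambda)$, this reads $\omega_e(\lambda)-\omega_e(\mu)=l\bigl(c_0^{e,{\bf s}}(\ulambda)-c_0^{e,{\bf s}}(\umu)\bigr)$. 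On the other hand the case $i=0$ of Proposition \ref{compare}, applied to $\ulambda$ and to $\umu$ and subtracted (the terms involving $\tau_{e,{\bf s}}(\uemptyset)$ and $\Core_e(\lambda)=\Core_e(\mu)$ cancel, using $\omega_e(\lambda)=p_{(e,{\bf s})}(\ulambda)$ from Theorem \ref{main}), gives $\omega_e(\lambda)-\omega_e(\mu)=(l+1)\bigl(c_0^{e,{\bf s}}(\ulambda)-c_0^{e,{\bf s}}(\umu)\bigr)$. Comparing the two relations forces $c_0^{e,{\bf s}}(\ulambda)=c_0^{e,{\bf s}}(\umu)$, whence $\omega_e(\lambda)=\omega_e(\mu)$ and finally $\mathcal{C}_{e,s}(\lambda)=\mathcal{C}_{e,s}(\Core_e(\lambda))+\omega_e(\lambda)\,(1,\dots,1)=\mathcal{C}_{e,s}(\mu)$. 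Running the reductions of the first paragraph backwards then also yields ``same block $\Rightarrow$ same core'', since $\mathcal{C}_{e,{\bf s}}(\ulambda)=\mathcal{C}_{e,{\bf s}}(\umu)$ gives $\mathcal{C}_{e,s}(\lambda)=\mathcal{C}_{e,s}(\mu)$ and hence, by the $l=1$ Nakayama theorem, $\Core_e(\lambda)=\Core_e(\mu)$.

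The main obstacle is precisely this last bookkeeping: neither $|\lambda|$ nor the number of elementary operations used to form the core is individually a transparent function of $(\ulambda,{\bf s})$, so it is not a priori clear that ``same core and same rank'' is exactly the combinatorial shadow of ``same block''. The rank identity obtained by summing Proposition \ref{compare}, combined with the weight identity of Theorem \ref{main}, is what pins this down and makes the $c_0$-discrepancy vanish. Everything else is either quoted (Lyle--Mathas, Corollary \ref{block}, the $l=1$ block theory) or immediate (invariance of $\sum_i s_i$ under an elementary operation, injectivity of $\tau_e^l$), and the case of a general multicharge ${\bf s}\in\Z^l$ follows from ${\bf s}\in\mathcal{A}_e^l$ by the standard reduction of \S\ref{esc}.
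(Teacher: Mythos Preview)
Your proof is correct and follows the same overall strategy as the paper's: transport through the Uglov map, invoke Corollary~\ref{block}, and appeal to Lyle--Mathas together with the classical $l=1$ block theory. The paper's own argument is the single sentence ``This directly follows from Corollary~\ref{block} together with the Lyle--Mathas characterization of blocks,'' which disposes of ``same block $\Rightarrow$ same core'' immediately but leaves the converse implicit: one still has to check that \emph{same core together with $|\ulambda|=|\umu|$} forces $|\lambda|=|\mu|$ (equivalently $\omega_e(\lambda)=\omega_e(\mu)$), since the Uglov images $\lambda=\tau_{e,{\bf s}}(\ulambda)$ and $\mu=\tau_{e,{\bf s}}(\umu)$ need not a priori have equal rank.

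You supply precisely this step. Your device---summing Proposition~\ref{compare} to get $\omega_e(\lambda)-\omega_e(\mu)=l\bigl(c_0^{e,{\bf s}}(\ulambda)-c_0^{e,{\bf s}}(\umu)\bigr)$, then using the $i=0$ case to get $\omega_e(\lambda)-\omega_e(\mu)=(l+1)\bigl(c_0^{e,{\bf s}}(\ulambda)-c_0^{e,{\bf s}}(\umu)\bigr)$, and comparing---is correct and efficient. One small remark: the parenthetical appeal to Theorem~\ref{main} at that point is not actually needed; the identity $c_0^{(e,s)}(\lambda)-c_0^{(e,s)}(\mu)=\omega_e(\lambda)-\omega_e(\mu)$ follows directly from $\Core_e(\lambda)=\Core_e(\mu)$ and the fact that each rim $e$-hook contributes exactly one node of every residue. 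So your argument is not a different route but a more complete version of the paper's, making explicit the bookkeeping the authors treat as routine.
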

\begin{proof}
	This directly follows from Corollary \ref{block} together with the Lyle-Mathas characterization of blocks.  
	
\end{proof}
\begin{exa}
	Let us take ${\bf s}=(0,1,3)$ and $e=4$. We consider the two $3$-partitions $\ulambda=((3,2),(1,1),(2,2,1))$ and $\umu=((1),(4,2),(3,2))$ with Young diagrams:
	$$
	\left(
	\begin{array}{|c|c|c|}
	\hline
	0& 1  &2   \\
	\hline
	3 & 0 \\
	\cline{1-2}
	\end{array}\;,
	\begin{array}{|c|}
	\hline
	1  \\
	\hline
	0  \\
	\hline
	\end{array}\; ,
	\begin{array}{|c|c|}
	\hline
	3   &0     \\
	\hline
	2   & 3 \\
	\hline
	1 \\
	\cline{1-1}
	\end{array}
	\right),\qquad \qquad 
	\left(
	\begin{array}{|c|}
	\hline
	0 \\
	\hline
	\end{array}\;,
	\begin{array}{|c|c|c|c|}
	\hline
	1  & 2 & 3 & 0\\
	\hline
	0  & 1\\
	\cline{1-2}
	\end{array}\; ,
	\begin{array}{|c|c|c|}
	\hline
	3   &0  & 1    \\
	\hline
	2   & 3 \\
	\cline{1-2}
	\end{array}
	\right)$$
	They are in the same block because $\mathcal{C}_{e,{\bf s}} (\ulambda) =\mathcal{C}_{e,{\bf s}} (\umu) $. Now the $3$-abacus of $\ulambda$ is
	
	\begin{center}
		\begin{tikzpicture}[scale=0.5, bb/.style={draw,circle,fill,minimum size=2.5mm,inner sep=0pt,outer sep=0pt}, wb/.style={draw,circle,fill=white,minimum size=2.5mm,inner sep=0pt,outer sep=0pt}]

		\node [wb] at (9,2) {};
		\node [wb] at (8,2) {};
		\node [wb] at (7,2) {};
		\node [bb] at (6,2) {};
		\node [bb] at (5,2) {};
		\node [wb] at (4,2) {};
		\node [bb] at (3,2) {};
		\node [wb] at (2,2) {};
		\node [bb] at (1,2) {};
		\node [bb] at (0,2) {};
		\node [bb] at (-1,2) {};
		\node [bb] at (-2,2) {};
		\node [bb] at (-3,2) {};
		\node [bb] at (-4,2) {};
		\node [bb] at (-5,2) {};
		\node [bb] at (-6,2) {};

		\node [wb] at (9,1) {};
		\node [wb] at (8,1) {};
		\node [wb] at (7,1) {};
		\node [wb] at (6,1) {};
		\node [wb] at (5,1) {};
		\node [wb] at (4,1) {};
		\node [bb] at (3,1) {};
		\node [bb] at (2,1) {};
		\node [wb] at (1,1) {};
		\node [bb] at (0,1) {};
		\node [bb] at (-1,1) {};
		\node [bb] at (-2,1) {};
		\node [bb] at (-3,1) {};
		\node [bb] at (-4,1) {};
		\node [bb] at (-5,1) {};
		\node [bb] at (-6,1) {};

		\node [wb] at (9,0) {};
		\node [wb] at (8,0) {};
		\node [wb] at (7,0) {};
		\node [wb] at (6,0) {};
		\node [wb] at (5,0) {};
		\node [bb] at (4,0) {};
		\node [wb] at (3,0) {};
		\node [bb] at (2,0) {};
		\node [wb] at (1,0) {};
		\node [wb] at (0,0) {};
		\node [bb] at (-1,0) {};
		\node [bb] at (-2,0) {};
		\node [bb] at (-3,0) {};
		\node [bb] at (-4,0) {};
		\node [bb] at (-5,0) {};
		\node [bb] at (-6,0) {};

		\draw[](-6.5,-0.5)--node[]{}(-6.5,2.5);
		\draw[](-2.5,-0.5)--node[]{}(-2.5,2.5);
		\draw[](1.5,-0.5)--node[]{}(1.5,2.5);
		\draw[](5.5,-0.5)--node[]{}(5.5,2.5);
		\draw[](9.5,-0.5)--node[]{}(9.5,2.5);
		\end{tikzpicture}
	\end{center}
	
	To determine its core, we perform the above procedure and we obtain the following $3$-abacus:
	
	\begin{center}
		\begin{tikzpicture}[scale=0.5, bb/.style={draw,circle,fill,minimum size=2.5mm,inner sep=0pt,outer sep=0pt}, wb/.style={draw,circle,fill=white,minimum size=2.5mm,inner sep=0pt,outer sep=0pt}]

		\node [wb] at (9,2) {};
		\node [wb] at (8,2) {};
		\node [wb] at (7,2) {};
		\node [wb] at (6,2) {};
		\node [wb] at (5,2) {};
		\node [wb] at (4,2) {};
		\node [bb] at (3,2) {};
		\node [bb] at (2,2) {};
		\node [bb] at (1,2) {};
		\node [bb] at (0,2) {};
		\node [bb] at (-1,2) {};
		\node [bb] at (-2,2) {};
		\node [bb] at (-3,2) {};
		\node [bb] at (-4,2) {};
		\node [bb] at (-5,2) {};
		\node [bb] at (-6,2) {};

		\node [wb] at (9,1) {};
		\node [wb] at (8,1) {};
		\node [wb] at (7,1) {};
		\node [wb] at (6,1) {};
		\node [wb] at (5,1) {};
		\node [wb] at (4,1) {};
		\node [bb] at (3,1) {};
		\node [bb] at (2,1) {};
		\node [bb] at (1,1) {};
		\node [bb] at (0,1) {};
		\node [bb] at (-1,1) {};
		\node [bb] at (-2,1) {};
		\node [bb] at (-3,1) {};
		\node [bb] at (-4,1) {};
		\node [bb] at (-5,1) {};
		\node [bb] at (-6,1) {};

		\node [wb] at (9,0) {};
		\node [wb] at (8,0) {};
		\node [wb] at (7,0) {};
		\node [wb] at (6,0) {};
		\node [wb] at (5,0) {};
		\node [wb] at (4,0) {};
		\node [wb] at (3,0) {};
		\node [bb] at (2,0) {};
		\node [wb] at (1,0) {};
		\node [bb] at (0,0) {};
		\node [bb] at (-1,0) {};
		\node [bb] at (-2,0) {};
		\node [bb] at (-3,0) {};
		\node [bb] at (-4,0) {};
		\node [bb] at (-5,0) {};
		\node [bb] at (-6,0) {};

		\draw[](-6.5,-0.5)--node[]{}(-6.5,2.5);
		\draw[](-2.5,-0.5)--node[]{}(-2.5,2.5);
		\draw[](1.5,-0.5)--node[]{}(1.5,2.5);
		\draw[](5.5,-0.5)--node[]{}(5.5,2.5);
		\draw[](9.5,-0.5)--node[]{}(9.5,2.5);
		\end{tikzpicture}
	\end{center}
	the associated $(e,{\bf s})$-core is the $3$-partition $((1),\emptyset,\emptyset)$ together with the multicharge $(0,2,2)$ and the weight is $8$ because we perform $8$ moves of beads to obtain this 
	core.  Now if we consider $\umu$  whose $3$-abacus is
	
	\begin{center}
		\begin{tikzpicture}[scale=0.5, bb/.style={draw,circle,fill,minimum size=2.5mm,inner sep=0pt,outer sep=0pt}, wb/.style={draw,circle,fill=white,minimum size=2.5mm,inner sep=0pt,outer sep=0pt}]

		\node [wb] at (9,2) {};
		\node [wb] at (8,2) {};
		\node [bb] at (7,2) {};
		\node [wb] at (6,2) {};
		\node [bb] at (5,2) {};
		\node [wb] at (4,2) {};
		\node [wb] at (3,2) {};
		\node [bb] at (2,2) {};
		\node [bb] at (1,2) {};
		\node [bb] at (0,2) {};
		\node [bb] at (-1,2) {};
		\node [bb] at (-2,2) {};
		\node [bb] at (-3,2) {};
		\node [bb] at (-4,2) {};
		\node [bb] at (-5,2) {};
		\node [bb] at (-6,2) {};

		\node [wb] at (9,1) {};
		\node [wb] at (8,1) {};
		\node [wb] at (7,1) {};
		\node [bb] at (6,1) {};
		\node [wb] at (5,1) {};
		\node [wb] at (4,1) {};
		\node [bb] at (3,1) {};
		\node [wb] at (2,1) {};
		\node [wb] at (1,1) {};
		\node [bb] at (0,1) {};
		\node [bb] at (-1,1) {};
		\node [bb] at (-2,1) {};
		\node [bb] at (-3,1) {};
		\node [bb] at (-4,1) {};
		\node [bb] at (-5,1) {};
		\node [bb] at (-6,1) {};

		\node [wb] at (9,0) {};
		\node [wb] at (8,0) {};
		\node [wb] at (7,0) {};
		\node [wb] at (6,0) {};
		\node [wb] at (5,0) {};
		\node [wb] at (4,0) {};
		\node [wb] at (3,0) {};
		\node [bb] at (2,0) {};
		\node [wb] at (1,0) {};
		\node [bb] at (0,0) {};
		\node [bb] at (-1,0) {};
		\node [bb] at (-2,0) {};
		\node [bb] at (-3,0) {};
		\node [bb] at (-4,0) {};
		\node [bb] at (-5,0) {};
		\node [bb] at (-6,0) {};

		\draw[](-6.5,-0.5)--node[]{}(-6.5,2.5);
		\draw[](-2.5,-0.5)--node[]{}(-2.5,2.5);
		\draw[](1.5,-0.5)--node[]{}(1.5,2.5);
		\draw[](5.5,-0.5)--node[]{}(5.5,2.5);
		\draw[](9.5,-0.5)--node[]{}(9.5,2.5);
		\end{tikzpicture}
	\end{center}
	and apply our procedure, one can check that we obtain the same core. 
	
\end{exa}
\begin{Rem}
	When $l$=1 and given an $e$-core $\lambda$, one can obtain directly all 
	the partitions in a fixed block with a given core weight $w$ by adding $w$ hooks  to $\lambda$ 
	while we stay in the set of partitions. 
	This process is less direct if $l>1$.  Let  $\ulambda$ be  a $(e,{\bf s})$-core.  We can assume that 
	${\bf s}\in \mathcal{A}^l_e$. Then if we perform 
	$w$ ``inverse'' elementary moves on its $l$-abacus, we obtain an $l$-partition $\umu$ 
	associated with a multicharge ${\bf s}'$ and the core of $\umu$  in   $\mathbb{F}\mathcal{H}_n^{{\bf s}'}(\eta)$   is 
	$(\ulambda,{\bf s})$. Now, still starting from the $e$-core,  if we  do $w$ other  ``inverse'' elementary moves on its $l$-abacus, one may obtain 
	another $l$-partition $\unu$ but also another multicharge ${\bf s}''$. Thus $\umu$ and $\unu$ will be in the same block of 
	$\mathbb{F}\mathcal{H}_n^{e,{\bf s}}(\eta)$ 
	if and only if ${\bf s}'={\bf s}''$. This means, one can obtain all the $l$-partitions in a fixed block of $\mathbb{F}\mathcal{H}_n^{{\bf s}'}(\eta)$  as in level $1$  except we have to keep only those with associated multicharge  $\bf s'$. 
	
\end{Rem}

\subsection{Multipartitions of small (block) weights}

As already noted in \cite{Fa}, in level $l>1$, each block of block weight $0$ contains exactly one $(e,{\bf s})$-core and thus is a simple block, as in the case $l=1$.  This  implies in particular that the Specht modules labeled by these $l$-partitions are irreducible
and that they coincide with their projective cover. 
 This shows that the  $(e,{\bf s})$-cores are always Uglov $l$-partitions. This is consistent with remark \ref{RT0}. 


%
%

In \cite[Th. 4.4]{Fa}, Fayers has given a description of the blocks of block weight $1$. Using our approach,  we here give an explicit characterization of these blocks.  When $l=1$, 
such blocks always contain exactly $e$ partitions. We will see that when $l>1$, this will depend on the multicharge we choose. 
Let ${\bf v}\in \mathbb{Z}^l$ and consider an $l$-partition $\umu$ with block weight $1$. The core af $\umu$ is the same as the core of 
the $l$-partition $\umu^{\sigma_{\bf v}}$ associated with the multicharge $\widetilde{\bf v}^{\sigma_{\bf v}}\in \mathcal{A}^l_e$. 
This means that we can in fact assume that    ${\bf v}\in \mathcal{A}_e^l$.

Now the  $l$-abacus of a  $l$-partition  $\umu$  with block weight $1$ for  the multicharge ${\bf v}$  can 
be  derived  from a reduced $(e,{\bf s})$-core $\ulambda$ where  ${\bf s}\in \overline{\mathcal{A}}^l_e$ by performing one   inverse elementary operation on the abacus of  $\ulambda$ (that is by inversing the procedure described in \S \ref{expl}). This  consists in moving  a black bead in position $x$   from a runner $i\in \{2,\ldots,l\}$ to the position $x$ of  the runner $i-1$, or 
from the runner $1$ in position $x$ to the runner $l$ in position $x+e$, if possible.

All the $l$-partitions $\umu$ of weight $1$ are then  obtained as follows:
\begin{itemize}
	\item For all $i\in \{1,\ldots,l-1\}$, if ${\bf s}:=(v_1,\ldots,v_{i-1}-1,v_{i}+1,\ldots,v_l)$ is such that ${\bf s}\in \overline{\mathcal{A}}_e^l$, they are obtained from a $(e,{\bf s})$-core $\ulambda$
	by doing one inverse elementary operation in its abacus from the runner $i$ to the runner $i-1$.  By definition 
	 of our notion of core, we can exactly do $v_i+1-(v_{i-1}-1)$ inverse elementary operations between the runner $i-1$ and the runner $i$. Thus, 
	we have 
	exactly $v_i-v_{i-1}+2$ multipartitions obtained from a given such  core  and they are all of the same rank
	$|\ulambda |+v_{i+1}-v_i+1$.
	\item If ${\bf s}:=(v_1+1,\ldots,v_l-1) $ is such that  ${\bf s}\in \overline{\mathcal{A}}_e^l$, they are 
	obtained from a $(e,{\bf s})$-core $\ulambda$ 
	by doing one inverse elementary operation in its abacus from the runner $1$ to the runner $l$.  We have 
	exactly $v_1-v_{l}+2+e$ multipartitions obtained from a given such core and they are all of the same rank     $|\ulambda |+v_{1}-v_l+e+1$.
\end{itemize}

\begin{Rem}
By \cite{LM}, the procedure described in this paper also gives the description of the blocks for affine Hecke algebras of type $A$. 
\end{Rem}

\begin{Rem}
	It is likely that the results of this paper may be used to study the block theory for the cyclotomic Hecke algebras of type $G(r,p,n)$.  Besides, 
	Theorem \ref{main}  gives  a correspondence between $(e,{\bf s})$-core  and $e$-cores which  could induce 
	similarities between blocks of Ariki-Koike algebras and blocks of Hecke algebras of type $A$. 
	We will come back to these questions in future works. 
\end{Rem}

\subsection{Examples}

We end this section with an example of computation of block weights and cores. We here take  $n=4$, $e=4$ and ${\bf s}=(0,1)$. Here is a table giving the block weight and the core of each $2$-partition. 
\begin{center}
	\begin{tabular}{ | c | c | c |}
		\hline 
		$2$-partition & core & block weight \\
		\hline			
		$((4),\emptyset)$ & $(\uemptyset ;(0,1) )$ & 2 \\
		\hline
		$((3),(1))$ & $((\emptyset,(1,1)) ;(0,3) )$ & 1 \\
		\hline
		$(\emptyset,4)$ & $(\uemptyset ;(0,1) )$   &2\\
		\hline  
		$((3,1),\emptyset)$ & $(\uemptyset ;(0,1) )$   &2\\
		\hline  
		$((2),(2))$ & $((\emptyset,1.1) ;(0,3) )$ & 1 \\
		\hline  
		$((1),(3))$ & $(\uemptyset ;(0,1) )$   &2\\
		\hline
		$((2,2),\emptyset)$ & $(((2),\emptyset) ;(0,3) )$   &1\\
		\hline
		$((2,1),(1))$  & $(((2,1),1) ;(0,1) )$   &0\\
		\hline
		$((2,1,1),\emptyset)$ & $(\uemptyset ;(0,1) )$   &2\\
		\hline
		$((2),(1,1))$ & $( ((2),(1,1)) ;(0,1) )$   &0\\
		\hline
	\end{tabular}	
		\begin{tabular}{ | c | c | c |}
		\hline 
		$2$-partition & core & block weight \\
		\hline
		$((1,1),(2))$ & $(\uemptyset ;(0,1) )$   &2\\
		\hline
		$((1),(2,1))$ & $( ((1),(2,1)) ;(0,1) )$   &0\\
		\hline
		$((1,1),(1,1))$ & $(((2),\emptyset) ;(0,3) )$   &1\\   
		\hline
		$(\emptyset,(3,1))$ & $(\uemptyset ;(0,1) )$   &2\\
		\hline
		$((1,1,1),(1))$& $(\uemptyset ;(0,1) )$   &2\\  
		\hline  
		$(\emptyset,(2,2))$ & $((\emptyset,(1,1)) ;(0,3) )$ & 1 \\ 
		\hline
		$((1,1,1,1),\emptyset)$ & $(\uemptyset ;(0,1) )$   &2\\  
		\hline
		$(\emptyset,(2,1,1))$ & $(\uemptyset ;(0,1) )$   &2\\
		\hline
		$((1),(1,1,1))$ & $(((2),\emptyset) ;(0,3) )$   &1\\      
		\hline 
		$(\emptyset,(1,1,1,1))$ & $(\uemptyset ;(0,1) )$   &2\\
		\hline	
	\end{tabular}
	
\end{center}

Note that the core of the blocks of block weight $1$ are always associated with the same multicharge, which is ${\bf v}=(0,3)$ and there is two different cores which gives $3=v_2-v_1$ elements in the same block in both cases.  
The rank of this core is the  $n-(s_2-s_1+1)=2$. 
This is consistent with the results of the previous section. 

\bigskip
Let us consider now the multicharge ${\bf s}:=(0,1,3)$ with $e=4$ and $n=4$. Then
\begin{itemize}
	\item we have seven $(e,{\bf s})$-cores: $(\emptyset,(3,1)$, $(\emptyset,(1),(1,1,1))$,
	$((1),(2,1),\emptyset)$, $((2),(1,1),\emptyset)$, $(\emptyset,(2),(1,1))$, $((1,1),\emptyset,(2))$, $((1),\emptyset,(2,1))$.
	\item We have three blocks of clock weight $1$ which are:
	\begin{itemize}
		\item $\{((2,1),\emptyset,(1)), ((1,1),(1),(1)),(\emptyset,(1,1,1),(1))\}$ with $(((1),\emptyset,(1)),(-1,2,3))$ as a core. 
		\item $\{((3),(1),\emptyset), ((1,1),(1),(1)),(\emptyset,(2,2),\emptyset)\}$ with $(\emptyset,(1,1),\emptyset),(-1,2,3))$ as a core. 
		\item $\{(\emptyset,(1),(3)), (2,1,1),((2,1),(1),\emptyset)\}$ with $((1),(1),\emptyset),(1,1,2))$ as a core.   
	\end{itemize}
	
\end{itemize}
Again, this is consistent with the results of the previous section. 


\end{document}